\theoremstyle{plain}
\newtheorem{theo}{Theorem}[section]
\newtheorem{lem}[theo]{Lemma}
\newtheorem{prop}[theo]{Proposition}
\theoremstyle{definition}
\newtheorem{definition}[theo]{Definition}
\theoremstyle{remark}
\newtheorem{rem}[theo]{Remark}
\numberwithin{equation}{section}
\newcommand{\C}{\mathbb{C}}
\newcommand{\R}{\mathbb{R}}
\newcommand{\N}{\mathbb{N}}
\newcommand{\M}{\mathbb{M}}
\newcommand{\divrg}{\textrm{div}\,}
\title{Stable determination of an inclusion in an inhomogeneous elastic body by boundary measurements
\thanks{The second author is supported by Universit\`a degli Studi di Trieste FRA 2012 `Problemi Inversi' and by GNAMPA of the
Istituto Nazionale di Alta Matematica (INdAM).} }
\author{Antonino
Morassi\thanks{Dipartimento di Ingegneria Civile e Architettura,
Universit\`a degli Studi di Udine, via Cotonificio 114, 33100
Udine, Italy. E-mail: \textsf{antonino.morassi@uniud.it}} \  and
Edi Rosset\thanks{Dipartimento di Matematica e Geoscienze,
Universit\`a degli Studi di Trieste, via Valerio 12/1, 34127
Trieste, Italy. E-mail: \textsf{rossedi@units.it}}}
\begin{document}

\maketitle

\begin{abstract}
In this paper we consider the stability issue for the inverse
problem of determining an unknown inclusion contained in an
elastic body by all the pairs of measurements of displacement and
traction taken at the boundary of the body. Both the body and the
inclusion are made by inhomogeneous linearly elastic isotropic
material. Under mild a priori assumptions about the smoothness of
the inclusion and the regularity of the coefficients, we show that
the logarithmic stability estimate proved in \cite{ADiCMR14} in
the case of piecewise constant coefficients continues to hold in
the inhomogeneous case. We introduce new arguments which allow to
simplify some technical aspects of the proof given in
\cite{ADiCMR14}.
\end{abstract}

\centerline{}

\section{Introduction}
\label{SecIntroduction}

The inverse problem of determining unknown inclusions in
continuous bodies {}from measurements of physical parameters taken
at the boundary of the body has attracted a lot of attention in
the last thirty years, see, among other contributions, the
reconstruction results obtained in \cite{Ike02}, \cite{UW08},
\cite{UWW09}. Inclusions may be due to the presence of
inhomogeneities or defects inside the body, and the development of
non-invasive testing approaches is of great importance in several
practical contexts, ranging {}from medicine to engineering
applications.

Inverse problems of this class are usually ill-posed according to
Hadamard's definition, and one of the main issues is the
uniqueness of the solution, that is the determination of the boundary
measurements which ensure the unique determination of the defect.
Moreover, {}from the point of view of practical applications, it
is crucial to establish how small perturbations on the data may
affect the accuracy of the identification of the inclusion,
namely, the study of the stability issue.

The prototype of these inverse problems is the determination of an
inclusion inside an electric conductor {}from boundary
measurements of electric potential and current flux. Uniqueness
was first proved by Isakov in $'88$ \cite{Is88}. The first
stability result is due to Alessandrini and Di Cristo
\cite{ADiC05}, who derived a logarithmic stability estimate of the
inclusion {}from all possible boundary measurements, that is
{}from the full Dirichlet-to-Neumann map. More precisely, the
authors considered in \cite{ADiC05} the case of piecewise-constant
coefficients and constructed an ingenious proof which, starting
{}from Alessandrini's identity (first derived in \cite{Al88}),
makes use of fundamental solutions for elliptic equations with
discontinuous coefficients, and suitable quantitative forms of
unique continuation for solutions to Laplacian equation. An
extension of the above result to the case of variable coefficients
was derived in \cite{DiC07}. The pioneering work \cite{ADiC05}
stimulated a subsequent line of research in which methods and
results were extended to other frameworks, such as, for example,
the stable identification of inclusions in thermal conductors
\cite{DiCV10}, \cite{DiCV11}, which involves a parabolic equation
with discontinuous coefficients.

Concerning the determination of an inclusion in an elastic body
{}from the Dirichlet-to-Neumann map, the uniqueness was proved by
Ikehata, Nakamura and Tanuma in \cite{INT99}. The stability issue
has been recently faced in \cite{ADiCMR14}. The statical
equilibrium of the defected body is governed by the following
system of elliptic equations
\begin{center}
\( {\displaystyle
\begin{array}{lr}
  \divrg((\C + (\C^D - \C)\chi_D)\nabla u)=0,
  & \hbox{in}\ \Omega,
\end{array}
 } \) \vskip -3.4em
\begin{eqnarray}
& & \label{eq:intro-1}
\end{eqnarray}
\end{center}
where $u$ is the three-dimensional displacement field inside the
elastic body $\Omega$, $\chi_D$ is the characteristic function of
the inclusion $D$, and $\C$, $\C^D$ is the elasticity tensor in
the background material and inside the inclusion, respectively.
Given inclusions $D_1$, $D_2$, let $\Lambda_{D_i}: H^{1/2}(\partial \Omega) \rightarrow
   H^{-1/2}(\partial \Omega)$ be the Dirichlet-to-Neumann map which
   gives the traction at the boundary $\partial \Omega$
   corresponding to a displacement field assigned on $\partial
   \Omega$, when $D=D_i$, $i=1,2$. Assuming that $\C$, $\C^{D_1}=\C^{D_2}$ are \textit{constant} and
   of Lam\'{e} type (e.g., isotropic material), and under
   $C^{1, \alpha}$-regularity of the boundary of the inclusion, the authors derived the following
   stability result. If, for some $\epsilon$, $0<\epsilon<1$,
\begin{equation}
  \label{eq:intro-2}
   \| \Lambda_{D_1}-\Lambda_{D_2}\|_{\mathcal{L}(H^{1/2}(\partial \Omega), H^{-1/2}(\partial
   \Omega))}\leq \epsilon,
\end{equation}
then the Hausdorff distance between the two inclusions can be
controlled as
\begin{equation}
  \label{eq:intro-3}
   d_H(\partial D_1, \partial D_2) \leq \frac{C}{| \log \epsilon |^\eta} ,
\end{equation}
where the constants $C>0$ and $\eta$, $0<\eta\leq 1$, only depend
on the a-priori data.

The piecewise-constant Lam\'{e} case can be considered as a
simplified mathematical model of real elastic bodies. Therefore,
it is of practical interest to extend the stability estimate
\eqref{eq:intro-3} to \textit{variable} coefficients both in the
background, $\C=\C(x)$, and in the inclusions,
$\C^{D_i}=\C^{D_i}(x)$, $i=1,2$. More precisely, assuming
$C^{1,1}$ and $C^\tau$ regularity, $\tau \in (0,1)$, for $\C$ and
$\C^{D_i}$, respectively, $i=1,2$, in this paper we show that
\eqref{eq:intro-3} continues to hold. Let us emphasize that in
order to derive our result the exact knowledge of the elasticity
tensor inside the inclusion is not needed.
In fact, only the strong convexity conditions \eqref{eq:3.1.new}
and the bounds \eqref{eq:3.2.new}, \eqref{eq:4.1.new},
\eqref{eq:4.3.new} are required. Moreover, as in \cite{ADiCMR14},
the inclusion is allowed to share a portion of its boundary with
the boundary of the body $\Omega$.

Let us briefly recall the main ideas of our approach and the new
mathematical tools we used in the proof of the stability result.
Let $\Gamma^{D_i}$ be the fundamental matrix
associated to the elasticity tensor $(\C +
(\C^{D_i}-\C)\chi_{D_i})$, $i=1,2$. The main idea is to obtain an
upper and a lower bound for $(\Gamma^{D_2}-\Gamma^{D_1})(y,w)$ for
points $y$ and $w$ belonging to the connected component of $\R^3
\setminus (\overline{D_1 \cup D_2})$ which contains $\R^3
\setminus \overline{\Omega}$, and approaching non-tangentially a
suitable point $P \in
\partial D_1 \setminus \overline{D_2}$ (or $
\partial D_2 \setminus \overline{D_1}$). A first crucial ingredient
in determining both upper and lower bounds is the integral
representation of $(\Gamma^{D_2}-\Gamma^{D_1})(y,w)$ given by
formula \eqref{integr-formula-D2_D1}. Next, the upper bound
follows {}from an application of Alessandrini's identity (suitably
adapted to linear elasticity, see Lemma $6.1$ in \cite{ADiCMR14})
and a propagation of smallness argument based on iterated use of
the three spheres inequality for solutions to the Lam\'{e} system
of linear elasticity with smooth variable coefficients.

In proving the lower bound (see Section \ref{proof-Lower_bound})
we introduce new arguments which entail a simplification of the
proof given for the piecewise-constant coefficient case. Indeed, a
generalization of Theorem $8.1$ in \cite{ADiCMR14}, which was a
key tool in proving the lower bound, should need the derivation of
an asymptotic approximation of $\Gamma^D$ in terms of the
fundamental matrix obtained by locally flattening the boundary
$\partial D$ and freezing the coefficients at a point belonging to
$\partial D$, which does not appear straightforward.

Finally, let us emphasize that the statement of Theorem $8.1$ in
\cite{ADiCMR14}, besides being worth of interest {}from a
theoretical viewpoint, may have relevant interest for its possible
applications. In fact, it turned out to be a fundamental
ingredient in the proof of Lipschitz stability estimates for the
inverse problem of determining the Lam\'e moduli for a piecewise
constant elasticity tensor corresponding to a known partition of
the body in a finite number of subdomains having regular
interfaces \cite{BFMRV14}, see also \cite{BFV13} for the case of
flat interfaces.

The plan of the paper is as follows. Notation and the a priori
information are introduced in section \ref{Main}, together with
the statement of the stability result (Theorem \ref{theo:6.1}). In
section \ref{Proof} we recall some auxiliary results, we state the
upper and lower bounds on $(\Gamma^{D_2}-\Gamma^{D_1})$, Theorems
\ref{theo:9.1} and \ref{theo:6.5}, and we give the proof of the
main Theorem \ref{theo:6.1}. Section \ref{proof-Lower_bound} is
devoted to the proof of Theorem \ref{theo:6.5}.

\section{The main result}
\label{Main}

\subsection{Notation}
\label{Main-notation}

Let us denote $\R^{3}_+= \{x\in \R^3 \ | \ x_3>0 \}$ and $\R^3_-=
\{x\in \R^3 \ | \ x_3<0 \}$. Given $x\in{\R}^3$, we shall denote
$x=(x',x_3)$, where $x'=(x_1,x_2)\in{\R}^{2}$, $x_3\in{\R}$. Given
$x \in \R^3$ and $r>0$, we shall use the following notation for
balls in three and two dimensions:
\begin{equation*}
    B_r(x)=\{y \in \R^3 \ \mid \ |y-x|<r\}, \quad B_r=B_r(O),
\end{equation*}
\begin{equation*}
    B'_r(x')=\{y' \in \R^2 \ \mid \ |y'-x'|<r\}, \quad
    B'_r=B'_r(O).
\end{equation*}

\begin{definition}
  \label{def:2.1} (${C}^{k,\alpha}$ regularity)
Let $E$ be a domain in ${\R}^{3}$. Given $k$,
$\alpha$, $k\in \N$, $0<\alpha\leq 1$, we say that $E$
is of \textit{class ${C}^{k,\alpha}$ with
constants $\rho_{0}$, $M_{0}>0$}, if, for any $P \in \partial E$, there
exists a rigid transformation of coordinates under which we have
$P=0$ and
\begin{equation*}
  E \cap B_{\rho_{0}}(O)=\{x \in B_{\rho_{0}}(O)\quad | \quad
x_{3}>\varphi(x')
  \},
\end{equation*}
where $\varphi$ is a ${C}^{k,\alpha}$ function on $B'_{\rho_{0}}$
satisfying
\begin{equation*}
\varphi(O)=0,
\end{equation*}
\begin{equation*}
|\nabla \varphi (O)|=0, \quad \hbox{when } k \geq 1,
\end{equation*}
\begin{equation*}
\|\varphi\|_{{C}^{k,\alpha}(B'_{\rho_{0}}(O))} \leq M_{0}\rho_{0}.
\end{equation*}

\end{definition}

Here and in the sequel all norms are normalized such that their
terms are dimensionally homogeneous. For instance
\begin{equation*}
  \|\varphi\|_{{C}^{k,\alpha}(B'_{\rho_{0}}(O))} =
  \sum_{i=0}^k \rho_0^i
  \|\nabla^i\varphi\|_{{L}^{\infty}(B'_{\rho_{0}}(O))}+
  \rho_0^{k+\alpha}|\nabla^k\varphi|_{\alpha, B'_{\rho_0}(O)},
\end{equation*}
where
\begin{equation*}
|\nabla^k\varphi|_{\alpha, B'_{\rho_0}(O)}= \sup_
{\overset{\scriptstyle x', \ y'\in B'_{\rho_0}(O)}{\scriptstyle
x'\neq y'}} \frac{|\nabla^k\varphi(x')-\nabla^k\varphi(y')|}
{|x'-y'|^\alpha}.
\end{equation*}

Similarly, for a vector function $u: \Omega \subset \R^3
\rightarrow \R^3$, we set
\begin{equation*}
\|u\|_{H^1(\Omega, \R^3)}=\left(\int_\Omega |u|^2
+\rho_0^2\int_\Omega|\nabla u|^2\right)^{\frac{1}{2}},
\end{equation*}
and so on for boundary and trace norms such as
$\|\cdot\|_{H^{\frac{1}{2}}(\partial\Omega, \R^3)}$,
$\|\cdot\|_{H^{-\frac{1}{2}}(\partial\Omega, \R^3)}$.

For any $U \subset \R^3$ and for any $r>0$, we denote
\begin{equation}
  \label{eq:2.int_env}
  U_{r}=\{x \in U \mid \textrm{dist}(x,\partial U)>r
  \},
\end{equation}
\begin{equation}
  \label{eq:2.ext_env}
  U^{r}=\{x \in \R^3 \mid \textrm{dist}(x, U )<r
  \}.
\end{equation}
We denote by $\M^{m\times n}$ the space of $m \times n$ real
valued matrices and we also use the notation $\M^n=\M^{n\times n}$.
Let ${\cal L} (X, Y)$ be the space of bounded
linear operators between Banach spaces $X$ and $Y$.

For every pair of real $n$-vectors $a$ and $b$, we denote by $a
\otimes b$ the $n \times n$ matrix with entries
\begin{equation}
  \label{eq:diade}
  (a \otimes b)_{ij} = a_i b_j, \quad i,j=1,...,n.
\end{equation}

For every $3 \times 3$ matrices $A$, $B$ and for every $\C\in{\cal
L} ({\M}^{3}, {\M}^{3})$, we use the following notation:
\begin{equation}
  \label{eq:2.notation_1}
  ({\C}A)_{ij} = \sum_{k,l=1}^{3} C_{ijkl}A_{kl},
\end{equation}
\begin{equation}
  \label{eq:2.notation_2}
  A \cdot B = \sum_{i,j=1}^{3} A_{ij}B_{ij},
\end{equation}
\begin{equation}
  \label{eq:2notation_3}
  |A|= (A \cdot A)^{\frac {1} {2}},
\end{equation}
where $C_{ijkl}$, $A_{ij}$ and $B_{ij}$ are the entries of $\C$,
$A$ and $B$ respectively.

Let us recall the definition of the Hausdorff distance $d_H(A,B)$ of two bounded closed sets $A,B\subset \R^3$
\begin{equation*}
  \label{eq:Hausdorff}
  d_H(A,B)= \max\left\{\max_{x\in A}d(x,B), \max_{x\in B}d(x,A)\right\}
\end{equation*}

\subsection{A-priori information and main result}
\label{Main-apriori-result}

We make the following a-priori assumptions. The continuous body
$\Omega$ is a bounded domain in $\R^3$ such that
\begin{equation}
  \label{eq:1.0}
  \R^3\setminus \overline{\Omega}\ \hbox{is connected},
\end{equation}
\begin{equation}
  \label{eq:1.1}
  |\Omega| \leq M_1 \rho_0^3,
\end{equation}
\begin{equation}
  \label{eq:1.1bis}
  \Omega \ \hbox{is of class } C^{1,\alpha}, \ \hbox{with
  constants } \ \rho_0, \ M_0,
\end{equation}
and the inclusion $D$ is a connected subset of $\Omega$ satisfying
\begin{equation}
  \label{eq:1.2}
  \R^3 \setminus \overline{D} \ \hbox{is connected},
\end{equation}
\begin{equation}
  \label{eq:1.4}
    D \ \hbox{is of class } C^{1,\alpha},\ \hbox{with
  constants } \ \rho_0, \ M_0,
\end{equation}
where $\rho_0$, $M_0$, $M_1$ are given positive constants, and
$0<\alpha \leq 1$.

The background material is linearly elastic isotropic, with
elasticity tensor $\C=\C(x)$, which - without restriction - may be
defined in the whole $\R^3$. The cartesian components of $\C(x)$
are
\begin{equation}
  \label{eq:2.1.new}
    C_{ijkl}(x)=\lambda(x) \delta_{ij}\delta_{kl} + \mu(x)
    (\delta_{ki}\delta_{lj}+\delta_{li}\delta_{kj}), \quad \hbox{for every } x \in
    \R^3,
\end{equation}
where $\delta_{ij}$ is the Kronecker's delta and the Lam\'{e}
moduli $\lambda=\lambda(x)$, $\mu=\mu(x)$ satisfy the strong
convexity conditions
\begin{equation}
  \label{eq:3.1.new}
    \mu(x) \geq \alpha_0, \quad 2\mu(x)+3\lambda(x) \geq \gamma_0,
    \quad \hbox{for every } x \in \R^3,
\end{equation}
for given constants $\alpha_0>0$, $\gamma_0 >0$. We shall also
assume upper bounds
\begin{equation}
  \label{eq:3.2.new}
    \mu(x) \leq \overline{\mu}, \quad \lambda(x) \leq
    \overline{\lambda},\quad \hbox{for every } x \in \R^3,
\end{equation}
where $\overline{\mu}>0$, $\overline{\lambda}\in \R$ are given
constants. Let us notice that \eqref{eq:2.1.new} clearly implies
the major and minor symmetries of $\C$, namely
\begin{equation}
  \label{eq:3.3.new}
    C_{ijkl}=C_{klij}=C_{lkij}, \quad \hbox{ }{i,j,k,l=1,2,3}.
\end{equation}
The inclusion $D$ is assumed to be made by linearly elastic
isotropic material having elasticity tensor $\C^D=\C^D(x)$ with
components
\begin{equation}
  \label{eq:3.4.new}
    C_{ijkl}^D(x)=\lambda^D(x) \delta_{ij}\delta_{kl} + \mu^D(x)
    (\delta_{ki}\delta_{lj}+\delta_{li}\delta_{kj}), \quad
    \hbox{for every } x \in \overline{\Omega},
\end{equation}
where the Lam\'{e} moduli $\lambda^D(x)$, $\mu^D(x)$ satisfy the
conditions \eqref{eq:3.1.new}--\eqref{eq:3.2.new} and, in
addition,
\begin{equation}
  \label{eq:4.1.new}
   (\lambda(x)-\lambda^D(x))^2+ (\mu(x)-\mu^D(x))^2\geq
   \eta_0^2>0,\quad \hbox{for every } x \in \overline{\Omega},
\end{equation}
for a given constant $\eta_0>0$.

Finally, the elasticity tensors $\C$ and $\C^D$ are assumed to be
of $C^{1,1}$ class in $\R^3$ and of $C^\tau$ class in $\overline{\Omega}$, $\tau \in
(0,1)$, respectively, that is
\begin{equation}
  \label{eq:4.2.new}
   \| \lambda \|_{C^{1,1}(\R^3)}+ \| \mu \|_{C^{1,1}(\R^3)} \leq
   M,
\end{equation}
\begin{equation}
  \label{eq:4.3.new}
   \| \lambda^D \|_{C^{\tau}(\overline{\Omega})}+ \| \mu^D \|_{C^{\tau}(\overline{\Omega})} \leq
   M,
\end{equation}
for a given constant $M>0$.

\medskip

For any $f \in H^{\frac{1}{2}}(\partial \Omega)$, let $u \in
H^1(\Omega)$ be the weak solution to the Dirichlet problem
\begin{center}
\( {\displaystyle \left\{
\begin{array}{lr}
  \divrg((\C + (\C^D - \C)\chi_D)\nabla u)=0,
  & \hbox{in}\ \Omega,
    \vspace{0.25em}\\
  u=f, & \hbox{on}\ \partial \Omega,
\end{array}
\right. } \) \vskip -4.4em
\begin{eqnarray}
& & \label{eq:5.1a.new}\\
& & \label{eq:5.1b.new}
\end{eqnarray}
\end{center}
where $\chi_D$ is the characteristic function of $D$. The
Dirichlet-to-Neumann map $\Lambda_D$ associated to
\eqref{eq:5.1a.new}--\eqref{eq:5.1b.new},
\begin{equation}
  \label{eq:5.2.new}
   \Lambda_D: H^{1/2}(\partial \Omega) \rightarrow
   H^{-1/2}(\partial \Omega),
\end{equation}
is defined in the weak form by
\begin{equation}
  \label{eq:5.3.new}
   <\Lambda_D f, v|_{\partial \Omega}>=\int_\Omega (\C + (\C^D - \C)\chi_D)
   \nabla u\cdot \nabla v,
\end{equation}
for every $v\in H^1(\Omega)$.

We prove the following logarithmic stability estimate for the
inverse problem of recovering the inclusion $D$ {}from the
knowledge of the map $\Lambda_D$.

\begin{theo}
   \label{theo:6.1}

Let $\Omega \subset \R^3$ be a bounded domain satisfying
\eqref{eq:1.0}--\eqref{eq:1.1bis} and let $D_1$, $D_2$ be two
connected inclusions contained in $\Omega$ satisfying
\eqref{eq:1.2}--\eqref{eq:1.4}. Let $\C(x)$ and $\C^{D_i}(x)$ be
the elasticity tensor of the material of $\Omega$ and of the
inclusion $D_i$, $i=1,2$, respectively, where $\C(x)$ given in
\eqref{eq:2.1.new} and $\C^{D_i}(x)$ given in \eqref{eq:3.4.new}
(for $D=D_i$) satisfy \eqref{eq:3.1.new}, \eqref{eq:3.2.new},
\eqref{eq:4.1.new}, \eqref{eq:4.2.new} and \eqref{eq:4.3.new}. If,
for some $\epsilon$, $0<\epsilon<1$,
\begin{equation}
  \label{eq:6.1.new}
   \| \Lambda_{D_1}-\Lambda_{D_2}\|_{\mathcal{L}(H^{1/2}(\partial \Omega), H^{-1/2}(\partial
   \Omega))}\leq \frac{\epsilon}{\rho_0},
\end{equation}
then
\begin{equation}
  \label{eq:6.2.new}
   d_H(\partial D_1, \partial D_2) \leq C  \rho_0  |\log \epsilon |^{-\eta},
\end{equation}
where $C>0$ and $\eta$, $0<\eta\leq 1$, are constants only
depending on $M_0$, $\alpha$, $M_1$, $\alpha_0$, $\gamma_0$, $
\overline{\mu}$, $ \overline{\lambda}$, $\eta_0$, $\tau$, $M$.
\end{theo}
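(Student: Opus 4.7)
The plan is to follow the general scheme of \cite{ADiC05} and \cite{ADiCMR14}: reduce the stability problem for the inclusion to a quantitative comparison of the fundamental matrices $\Gamma^{D_1}$ and $\Gamma^{D_2}$ associated to the elasticity tensors $\C+(\C^{D_i}-\C)\chi_{D_i}$, $i=1,2$. Let $G$ denote the connected component of $\R^3 \setminus \overline{D_1 \cup D_2}$ containing $\R^3 \setminus \overline{\Omega}$, and assume without loss of generality that $d_H(\partial D_1, \partial D_2)$ is attained at a point $P \in \partial D_1 \setminus \overline{D_2}$. I would then consider pairs of points $y, w \in G$ approaching $P$ non-tangentially along the inner normal to $\partial D_1$ at $P$, and estimate $(\Gamma^{D_2}-\Gamma^{D_1})(y,w)$ from above and below.

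For the upper bound (Theorem \ref{theo:9.1}), I would start from the Alessandrini-type identity for the elasticity system (the Lam\'{e} analogue of Lemma 6.1 in \cite{ADiCMR14}), applied to the pair of solutions obtained by extending as single-layer potentials on $\partial \Omega$, based on $\Gamma^{D_i}$, the columns of the fundamental matrices with poles at $y$ and $w$. Plugging this into \eqref{eq:6.1.new} bounds $(\Gamma^{D_2}-\Gamma^{D_1})(y,w)$ by $\epsilon$ when $y,w$ are far from $\partial\Omega$; one then propagates the smallness of this quantity along chains of balls inside $G$ up to a neighborhood of $P$ by iterated three-spheres inequalities for the Lam\'{e} system with $C^{1,1}$ variable coefficients. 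The outcome is schematically
\begin{equation*}
\rho_0\,|(\Gamma^{D_2}-\Gamma^{D_1})(y,w)| \leq C\,\omega(\epsilon)^{\theta(r)},
\end{equation*}
where $r=\textrm{dist}(y,P)\simeq \textrm{dist}(w,P)$, $\omega$ is a logarithmic modulus of continuity, and $\theta(r)$ decays polynomially as $r\to 0$.

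The hard part is the lower bound (Theorem \ref{theo:6.5}). Taking $y=w$ on the inner normal to $\partial D_1$ at $P$, the singularity of $\Gamma^{D_1}$ crosses the jump locus of the coefficients at $P$, while $\Gamma^{D_2}$ has smooth coefficients in a neighborhood of $P$; the nondegeneracy \eqref{eq:4.1.new} of the Lam\'{e} jump at $P$ should then force the difference to blow up as $r\to 0$. Starting from the integral representation \eqref{integr-formula-D2_D1}, I would isolate the principal contribution carrying the singularity at $P$ and control the remainders using the $C^\tau$ regularity \eqref{eq:4.3.new} of $\C^{D_1}$. The new element announced in the introduction is to avoid a full asymptotic expansion of $\Gamma^{D_1}$ at $\partial D_1$ under variable coefficients (the route of Theorem 8.1 of \cite{ADiCMR14}) and replace it by a direct comparison with a local model in which $\partial D_1$ is flattened and the Lam\'{e} moduli are frozen at $P$. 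The target estimate reads
\begin{equation*}
\rho_0\,|(\Gamma^{D_2}-\Gamma^{D_1})(y,y)| \geq c\, r^{-\beta}
\end{equation*}
for some $\beta>0$ depending only on the a priori data.

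Once both bounds are in hand, evaluating them at $y=w=P+rn$ yields $r^{-\beta} \leq C\,\omega(\epsilon)^{\theta(r)}$; optimizing $r$ as an appropriate power of $|\log\epsilon|^{-1}$ balances the two sides and gives $\textrm{dist}(P,D_2) \leq C\rho_0 |\log\epsilon|^{-\eta}$ for some $\eta\in(0,1]$. A standard argument based on the $C^{1,\alpha}$ regularity of $\partial D_1$ and $\partial D_2$ (swapping the roles of $D_1$ and $D_2$ to handle the other one-sided distance) upgrades this one-point estimate to the Hausdorff bound \eqref{eq:6.2.new}. The main obstacle is indeed the lower bound: in the variable-coefficient setting the fine behavior of $\Gamma^{D_1}$ near $\partial D_1$ is no longer given by an explicit formula, so the comparison with the frozen-coefficient local model must be performed carefully, quantifying the error introduced both by boundary flattening and by the $C^\tau$ perturbation of the Lam\'{e} moduli.
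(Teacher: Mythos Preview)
Your overall scheme matches the paper's: reduce to a point $P\in\partial D_1$, derive an upper bound on $(\Gamma^{D_2}-\Gamma^{D_1})$ near $P$ by Alessandrini's identity plus propagation of smallness (Theorem~\ref{theo:9.1}), a lower bound by comparison with the frozen--coefficient half--space model (Theorem~\ref{theo:6.5}), and then combine. Two technical points, however, deviate from the paper and would cause trouble as written.

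First, the paper does \emph{not} take $y=w$. It uses two distinct points $y_h=P-he_3$ and $w_h=P-\lambda_w h e_3$ with $\lambda_w\in\{\tfrac23,\tfrac34,\tfrac45\}$ chosen depending on the coordinate $i$. This is essential in the lower bound: the proof proceeds via the telescoping comparison
\[
\Gamma^{D_2}-\Gamma^{D_1}=(\Gamma^{D_2}-\Gamma)+(\Gamma-\Gamma_0)+(\Gamma_0-\Gamma_0^+)+(\Gamma_0^+-\Gamma_0^{D_1})+(\Gamma_0^{D_1}-\Gamma^{D_1}),
\]
and several of the intermediate differences (for instance $\Gamma-\Gamma_0$ and $\Gamma_0^{D_1}-\Gamma^{D_1}$) are genuinely singular on the diagonal, since the coefficients frozen at $O$ and at $y_h$ differ; the remainder estimates \eqref{6.5}--\eqref{5.6} blow up as $|y_h-w_h|\to 0$. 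Moreover, the leading half--space term $(\Gamma_0^+-\Gamma_0)(y_h,w_h)e_i\cdot e_i$ is controlled from below (Lemma~\ref{lem:1.1}) only after selecting $\lambda_w$ for each $i$; there is no guarantee that the coincident--point value is nonzero for every diagonal entry. So you should keep $y\neq w$ with a fixed ratio.

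Second, the reduction to a single point $P$ is not ``the Hausdorff distance is attained at $P$''. One needs $P\in\partial D_1\cap\partial\mathcal G$ accessible from $\mathcal G$ through a cone (so that the propagation of smallness reaches it), and this is the content of the metric Lemma~\ref{lem:7.1}, which only yields $d_H(\partial D_1,\partial D_2)\le \bar c\,\mathrm{dist}(P,D_2)$ with a constant $\bar c\ge 1$, not equality. The final step is then simply to substitute $h=h^*\,\mathrm{dist}(P,D_2)$ into the combined inequality $C\le \epsilon^{C_1(h/\rho_0)^{C_2}}$ (the $1/h$ factors in the upper and lower bounds cancel), rather than an optimization in $r$.
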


\begin{rem}
If in Theorem \ref{theo:6.1} we further assume that the two
inclusions are at a prescribed distance {}from $\partial \Omega$,
then the result continues to hold even when the local
Dirichlet-to-Neumann map is known. The proof can be obtained by
adapting the general theory developed by Alessandrini and Kim
\cite{AK12}.
\end{rem}

\section{Proof of the main result}
\label{Proof}

In order to state the metric Lemma \ref{lem:7.1} below, we need to
introduce some notation.

We denote by $\mathcal{G}$ the connected component of $\R^3
\setminus (\overline{D_1 \cup D_2})$ which contains $\R^3
\setminus \overline{\Omega}$.

Given $O=(0,0,0)$, a unit vector $v$, $h>0$ and $\vartheta \in
\left ( 0, \frac{\pi}{2} \right )$, we denote by
\begin{equation}
  \label{eq:cono}
   C (O, v, h, \vartheta )
   =\left \{
   x \in \R^3 |\ |x - (x \cdot v)v| \leq \sin \vartheta |x|, \ 0 \leq x\cdot v \leq h  \right \}
\end{equation}
the closed truncated cone with vertex at $O$, axis along the
direction $v$, height $h$ and aperture $2\vartheta$. Given $R$,
$d$, $0 < R < d$ and $Q=-de_3 $, let us consider the cone $C \left
(O,-e_3, \frac{d^2-R^2}{d}, \arcsin \frac{R}{d}\right )$, whose
lateral boundary is tangent to the sphere
$\partial B_R(Q)$ along the circumference of its base.

Given a point $P\in\partial D_1\cap\partial \mathcal{G}$, let
$\nu$ be the outer unit normal to $\partial D_1$ at $P$ and let
$d>0$ be such that the segment $[P+d\nu,P]$ is contained in
$\overline{\mathcal{G}}$. For a point $P_0\in
\overline{\mathcal{G}}$, let $\gamma$ be a path in
$\overline{\mathcal{G}}$ joining $P_0$ to $P+d\nu$. We consider
the following neighbourhood of $\gamma\cup [P+d\nu,P]\setminus
\{P\}$ formed by a tubular neighbourhood of $\gamma$ attached to a
cone with vertex at $P$ and axis along $\nu$
\begin{equation}
  \label{eq:matita}
   V(\gamma,d,R) = \bigcup_{S \in \gamma} B_R(S) \cup
    C \left (P,\nu, \frac{d^2-R^2}{d}, \arcsin
   \frac{R}{d}\right ).
\end{equation}
Let us also define
\begin{equation}
  \label{eq:8.2}
   S_{2\rho_0} = \left \{ x\in\R^3 \ | \rho_0 < \hbox{dist}(x,\Omega) <2\rho_0 \right
   \}.
\end{equation}
\begin{lem}
\label{lem:7.1}
    Under the assumptions of Theorem \ref{theo:6.1}, up to inverting the role of $D_1$ and $D_2$, there exist
    positive constants $\overline{d}$, $\overline{c}$, where
    $\frac{\overline{d}}{\rho_0}$ only depends on $M_0$ and
    $\alpha$, and  $\overline{c} \geq 1$ only depends on $M_0$, $\alpha$ and $M_1$, and there exists
    a point $P \in \partial D_1  \cap \partial \mathcal{G} $ such that
\begin{equation}
  \label{eq:7.1.new}
        d_H (\partial D_1, \partial D_2) \leq \overline{c} \
        \hbox{dist}(P, D_2),
\end{equation}
and such that, giving any point $P_0 \in S_{2\rho_0}$, there
exists a path $\gamma \subset \Omega^{2\rho_0} \cap
\mathcal{G}$ joining $P_0$ to $P+\overline{d}\nu$, where
$\nu$ is the unit outer normal to $D_1$ at $P$, such that,
choosing a coordinate system with origin $O$ at $P$ and axis
$e_3=-\nu$, we have
\begin{equation}
  \label{eq:8BIS.4}
   V(\gamma, \overline{d}, \overline{R}) \subset \R^3 \cap \overline{\mathcal{G}},
\end{equation}
where $\frac{\overline{R}}{\rho_0}$
only depends on $M_0$ and
$\alpha$.
\end{lem}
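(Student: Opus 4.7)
The plan is to first select a point $P \in \partial D_1 \cap \partial \mathcal{G}$ whose distance to $D_2$ controls the Hausdorff distance up to a geometric constant, and then to use the uniform $C^{1,\alpha}$ regularity of $\partial D_1$ and $\partial \Omega$, together with the volume bound \eqref{eq:1.1}, to thread a path in $\Omega^{2\rho_0} \cap \mathcal{G}$ from the exterior collar $S_{2\rho_0}$ to $P+\overline{d}\nu$ that stays at a uniform tubular margin from $\partial \mathcal{G}$.

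To locate $P$, up to swapping $D_1$ and $D_2$ (which preserves the Hausdorff distance) the definition of $d_H$ provides a point $Q \in \partial D_1$ with $d(Q, \partial D_2) = d_H(\partial D_1, \partial D_2)$. In the easy case $Q \notin \overline{D_2}$, one has $d(Q, D_2) = d_H$, so the ball $B_{d_H}(Q)$ is disjoint from $D_2$, and the $C^{1,\alpha}$ regularity of $\partial D_1$ at $Q$ furnished by \eqref{eq:1.4} produces a truncated cone along the outward normal $\nu(Q)$, with aperture and height depending only on $M_0$ and $\alpha$, contained in $D_1^c \cap B_{d_H}(Q) \subset (D_1 \cup D_2)^c$; a standard connectedness argument using \eqref{eq:1.0} and \eqref{eq:1.2} places this cone in the unbounded component $\mathcal{G}$, so one takes $P=Q$. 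In the remaining case $Q \in \overline{D_2}$ the surfaces $\partial D_1$ and $\partial D_2$ overlap near $Q$, and the part of $\partial D_1$ facing $\mathcal{G}$ may be away from $Q$; the joint $C^{1,\alpha}$ regularity of both boundaries, together with connectedness of $\mathcal{G}$ and the volume bound \eqref{eq:1.1}, allows one to travel along $\partial D_1 \cap \partial \mathcal{G}$ to locate a point $P$ with $d(P, D_2) \geq c\, d_H$ for some $c = c(M_0,\alpha,M_1) > 0$, as in the corresponding step for the piecewise constant case treated in \cite{ADiCMR14}; the reciprocal of $c$ is absorbed into $\overline{c}$.

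Next, placing the origin at $P$ and setting $e_3 = -\nu$, I would fix $\overline{d}$ and $\overline{R}$ so that the cone appearing in \eqref{eq:matita} attached at $P$ fits inside the interior cone produced in step one; this forces $\overline{d}/\rho_0$ and $\overline{R}/\rho_0$ to depend only on $M_0$ and $\alpha$. Given $P_0 \in S_{2\rho_0}$, I would first join $P_0$ to a point of $\Omega^{2\rho_0}\setminus \overline{\Omega}$ by a short arc in the exterior collar, exploiting \eqref{eq:1.0} and the $C^{1,\alpha}$ boundary regularity of $\Omega$ in \eqref{eq:1.1bis}; then continue inside $\mathcal{G}_{\overline{R}} \cap \Omega^{2\rho_0}$, the $\overline{R}$-interior shrink of $\mathcal{G}$ intersected with the $2\rho_0$-fattening of $\Omega$, which is path connected with uniformly bounded diameter by \eqref{eq:1.1}. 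Quantitatively, a chain of at most $N=N(M_0,\alpha,M_1)$ balls of radius $\overline{R}/2$, centered in $\mathcal{G}_{\overline{R}} \cap \Omega^{2\rho_0}$ with successive centers within distance $\overline{R}/2$, yields a piecewise linear backbone of uniformly controlled length. Appending the segment $[P_0',\, P+\overline{d}\nu]$ inside the cone gives $\gamma$, and the inclusion \eqref{eq:8BIS.4} is immediate since every point of the tubular neighborhood $V(\gamma,\overline{d},\overline{R})$ lies either in a ball of radius $\overline{R}$ around a center at distance $\geq \overline{R}$ from $\partial \mathcal{G}$, or in the truncated cone at $P$ contained in $\overline{\mathcal{G}}$.

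The main obstacle is the overlap case $Q \in \overline{D_2}$ of step one: the visibility of $\partial D_1$ from the unbounded component $\mathcal{G}$ must be controlled using only the uniform $C^{1,\alpha}$ bounds on $\partial D_1$ and $\partial D_2$ and the volume bound, with no transversality hypothesis on the intersection of the two boundaries, and this is where both the freedom of exchanging $D_1$ and $D_2$ and the non-sharp constant $\overline{c}\ge 1$ are essential. Once $P$, $\overline{d}$ and $\overline{R}$ are fixed, the path construction is technical but standard, and quantitative dependence of all constants on $M_0,\alpha,M_1$ follows by bookkeeping the regularity parameters of Definition \ref{def:2.1}.
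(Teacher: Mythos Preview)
The paper does not give a self-contained argument here; it simply records that the lemma is a straightforward consequence of Lemmas~4.1 and~4.2 of \cite{ADiCMR14}. Your plan follows the same two-step architecture (locate $P$, then build the tube-plus-cone neighbourhood) and, for the delicate part, also points to \cite{ADiCMR14}, so at the level the paper itself operates you are aligned with it.

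There is, however, an unjustified step in your ``easy case'' $Q\notin\overline{D_2}$. The claim that ``a standard connectedness argument using \eqref{eq:1.0} and \eqref{eq:1.2} places this cone in the unbounded component $\mathcal{G}$'' does not follow from those hypotheses alone: connectedness of $\R^3\setminus\overline{D_1}$ and of $\R^3\setminus\overline{D_2}$ individually does not preclude bounded components of $\R^3\setminus\overline{D_1\cup D_2}$, and nothing in your argument excludes the exterior cone at $Q$ from lying in such a pocket rather than in $\mathcal{G}$. (Think, for instance, of $D_1$ and $D_2$ as two thickened hemispherical caps whose union is a full spherical shell: each has connected complement, yet the shell encloses a cavity, and points on the inner boundary of $D_1$ have exterior normals pointing into that cavity.) Consequently one cannot in general set $P=Q$; producing a point $P\in\partial D_1\cap\partial\mathcal{G}$ with $\hbox{dist}(P,D_2)\geq\overline{c}^{\,-1}d_H$ is exactly the nontrivial content of Lemma~4.1 in \cite{ADiCMR14} --- where the freedom to swap $D_1$ and $D_2$ and the slack constant $\overline{c}\geq 1$ already enter --- and it is needed uniformly, not only in the overlap situation you single out as the main obstacle. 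Once $P$ is correctly selected, your description of the path-and-cone construction corresponds to Lemma~4.2 of \cite{ADiCMR14} and is fine.
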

The thesis of the above lemma is a straightforward consequence of
Lemma $4.1$ and Lemma $4.2$ in \cite{ADiCMR14}, and is inspired by
results obtained in \cite{AS12} and \cite{ADiC05}.

\medskip

Let $D$ be a domain of class $C^{1,\alpha}$ with constants
$\rho_0$, $M_0$ and $0<\alpha\leq 1$. The elasticity tensors $\C$
and $\C^D$ given by \eqref{eq:2.1.new} and \eqref{eq:3.4.new}
respectively, satisfy \eqref{eq:3.1.new}, \eqref{eq:3.2.new},
\eqref{eq:4.2.new} and \eqref{eq:4.3.new}.

Given $y \in \R^3$ and a \textit{concentrated force} $l
\delta(\cdot - y)$ applied at $y$, with $l \in \R^3$, let us
consider the \textit{normalized fundamental solution} $u^D \in
L^1_{loc}(\R^3, \R^3)$ defined by
\begin{equation}
  \label{eq:8.1.new}
  \left\{ \begin{array}{ll}
  \divrg_x \left ( (\C(x) + (\C^D(x) - \C(x))\chi_D)\nabla_x u^D(x,y;l) \right ) =-l\delta(x-y),
  & \hbox{in}\ \R^3\setminus \{y\},\\
  &  \\
      \lim_{|x| \rightarrow \infty} u^D(x,y;l)=0,\\
  \end{array}\right.
\end{equation}
where $\delta(\cdot - y)$ is the Dirac distribution supported at
$y$. It is well-known that
\begin{equation}
  \label{eq:8.2,new}
   u^D(x,y;l) = \Gamma^D(x,y)l,
\end{equation}
where $\Gamma^D=\Gamma^D(\cdot,y) \in L^1_{loc}(\R^3,
\mathcal{L}(\R^3,\R^3))$ is the \textit{normalized fundamental
matrix} for the operator $\divrg_x((\C(x) + (\C^D(x) -
\C(x))\chi_D)\nabla_x (\cdot))$. Existence of $\Gamma^D$ and
asymptotic estimates are stated in the following Proposition.
\begin{prop}
   \label{prop:8.1}
Under the above assumptions, there exists a unique fundamental
matrix $\Gamma^D(\cdot, y) \in C^0(\R^3\setminus \{y\})$, such that
\begin{equation}
  \label{eq:8.3.new}
   \Gamma^D(x,y) = (\Gamma^D(y,x))^T, \quad \hbox{for every } x\in
   \R^3, \ x \neq y,
\end{equation}
\begin{equation}
  \label{eq:8.4.new}
   |\Gamma^D(x,y)| \leq C |x-y|^{-1}, \quad \hbox{for every } x\in
   \R^3, \ x \neq y,
\end{equation}
\begin{equation}
  \label{eq:8.5.new}
   |\nabla_x \Gamma^D(x,y)| \leq C |x-y|^{-2}, \quad \hbox{for every } x\in
   \R^3, \ x \neq y,
\end{equation}
where the constant $C>0$ only depends on $M_0$, $\alpha$,
$\alpha_0$, $\gamma_0$, $\overline{\lambda}$, $\overline{\mu}$,
$\tau$, $M$.
\end{prop}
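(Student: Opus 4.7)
I will construct $\Gamma^D$ column by column: for fixed $y\in\R^3$ and $l\in\R^3$, produce $u^D(\cdot,y;l)$ solving \eqref{eq:8.1.new} and set $\Gamma^D(\cdot,y)\,l = u^D(\cdot,y;l)$. Let $\tilde{\C}(x) = \C(x) + (\C^D(x)-\C(x))\chi_D(x)$, extended by $\C(x)$ outside $\overline{\Omega}$. By \eqref{eq:3.1.new}--\eqref{eq:3.2.new} and \eqref{eq:3.3.new}, $\tilde{\C}$ is globally bounded, strongly convex and satisfies the major and minor symmetries; by \eqref{eq:4.2.new}--\eqref{eq:4.3.new}, it is of class $C^{1,1}$ on $\R^3\setminus\overline{D}$ and $C^{\tau}$ on $\overline{D}$, with only a jump across the $C^{1,\alpha}$ interface $\partial D$.

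For existence and the bound \eqref{eq:8.4.new}, I would follow the standard construction of fundamental matrices for strongly elliptic divergence-form systems. Coercivity of the bilinear form $a(u,v) = \int_{\R^3}\tilde{\C}\nabla u\cdot\nabla v$ on the homogeneous Sobolev space $\mathcal{D}^{1,2}(\R^3,\R^3)$ follows from the second Korn inequality combined with \eqref{eq:3.1.new}. Approximating $l\,\delta(\cdot-y)$ by mollified sources $f_\varepsilon$, one obtains $u_\varepsilon\in\mathcal{D}^{1,2}$ by Lax--Milgram. A pointwise bound of the form $|u_\varepsilon(x)|\le C|x-y|^{-1}$ for $x\neq y$ is then obtained by comparing $u_\varepsilon$ with the explicit fundamental solution of the constant-coefficient operator obtained by freezing $\tilde{\C}$ at $y$, exploiting the interior H\"older continuity of weak solutions of the homogeneous system (which, across the $C^{1,\alpha}$ interface, follows from piecewise Schauder/transmission theory). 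Passing to the limit yields $\Gamma^D(\cdot,y)\in C^0(\R^3\setminus\{y\})$ satisfying \eqref{eq:8.4.new}, and the decay at infinity is an immediate consequence. Uniqueness follows from an energy estimate applied to the difference of two fundamental matrices in $\mathcal{D}^{1,2}$. The symmetry \eqref{eq:8.3.new} is then derived by pairing two fundamental matrices in the bilinear form: for $x\neq y$,
\begin{equation*}
   \Gamma^D_{ij}(x,y) \;=\; a\bigl(\Gamma^D(\cdot,x)e_i,\ \Gamma^D(\cdot,y)e_j\bigr),
\end{equation*}
and the major symmetry $C_{ijkl}=C_{klij}$ gives $a(u,v)=a(v,u)$, whence $\Gamma^D_{ij}(x,y)=\Gamma^D_{ji}(y,x)$, which is exactly \eqref{eq:8.3.new}.

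The main obstacle is the gradient bound \eqref{eq:8.5.new}, which I would treat by a rescaling argument. For $x\neq y$, set $r=|x-y|/4$; then $v=\Gamma^D(\cdot,y)$ is a weak solution of the homogeneous system $\divrg(\tilde{\C}\nabla v)=0$ on $B_{2r}(x)$, so an interior Lipschitz estimate of the form
\begin{equation*}
   \|\nabla v\|_{L^{\infty}(B_r(x))} \;\le\; \frac{C}{r}\,\|v\|_{L^{\infty}(B_{2r}(x))}
\end{equation*}
combined with \eqref{eq:8.4.new} immediately gives \eqref{eq:8.5.new}. When $B_{2r}(x)\cap\partial D=\emptyset$, the tensor $\tilde{\C}$ is H\"older continuous on $B_{2r}(x)$ and such an estimate is classical Schauder theory for strongly elliptic systems. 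When $B_{2r}(x)$ meets $\partial D$, the jump of $\tilde{\C}$ across the interface prevents a direct Schauder bound, and one must invoke the piecewise $C^{1,\beta}$ interior transmission regularity across a $C^{1,\alpha}$ interface, in the spirit of Li--Nirenberg and Li--Vogelius: locally flattening $\partial D$ via the $C^{1,\alpha}$ parametrization of Definition \ref{def:2.1} and freezing the principal parts on either side reduces the problem to a transmission system with constant principal coefficients and H\"older remainder, for which scale-invariant Lipschitz estimates are available. The delicate point is to keep these constants uniform in $r$ and in the position of $B_{2r}(x)$ relative to $\partial D$, so that they depend only on the a-priori parameters listed in the statement; this patching is the step that requires the most care.
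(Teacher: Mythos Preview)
Your approach is essentially the one the paper invokes: it does not give a self-contained proof but refers to the Hofmann--Kim construction \cite{HK07} for existence, uniqueness, symmetry and the bound \eqref{eq:8.4.new}, and to the Li--Nirenberg transmission estimates \cite{LN03} for the interior Lipschitz bound across the $C^{1,\alpha}$ interface that yields \eqref{eq:8.5.new}; details are deferred to \cite{ADiCMR14}. Your sketch identifies exactly these two ingredients.

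One small correction is worth noting. Your derivation of the pointwise bound \eqref{eq:8.4.new} by \emph{freezing} $\tilde{\C}$ at $y$ and comparing with the constant-coefficient Kelvin matrix is not the Hofmann--Kim mechanism and would be problematic when $y$ lies on or near $\partial D$: the jump of $\tilde{\C}$ across the interface means the frozen parametrix is a poor local approximation there, and the remainder is not small. The Hofmann--Kim argument bypasses this by assuming only that weak solutions of the homogeneous system (and its adjoint) satisfy a uniform local $L^\infty$ (or H\"older) estimate---their ``property (H)''---and then obtains $|\Gamma^D(x,y)|\le C|x-y|^{-1}$ by duality and scaling, with no freezing. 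For the Lam\'e system with merely bounded measurable coefficients this property can fail, but here it is supplied precisely by the piecewise $C^{1,\beta}$ transmission regularity of Li--Nirenberg that you already invoke for the gradient bound. With that adjustment your outline matches the references the paper cites.
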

A proof of Proposition \ref{prop:8.1} follows by merging the
regularity results by Li and Nirenberg \cite{LN03} and the
analysis by Hofmann and Kim \cite{HK07}, see \cite{ADiCMR14} for
details.

\medskip

Let $D_i$, $i=1,2$, be a domain of class $C^{1,\alpha}$ with
constants $\rho_0$, $M_0$ and $0<\alpha\leq 1$, and consider the
elasticity tensors
\begin{equation}
  \label{eq:elasticity-tensors}
   \C^1 = \C \chi_{\R^3\setminus D_1} + \C^{D_1}\chi_{D_1}, \quad
   \C^2 = \C \chi_{\R^3\setminus D_2} + \C^{D_2}\chi_{D_2},
\end{equation}
where $\C^{D_1}$, $\C^{D_2}$ given in \eqref{eq:3.4.new} (with
$D=D_1$ and $D=D_2$, respectively) satisfy \eqref{eq:3.1.new},
\eqref{eq:3.2.new} and \eqref{eq:4.3.new}.

The following Proposition \ref{prop:integr-repres} states an
integral representation involving the normalized fundamental
matrices corresponding to inclusions $D_1$ and $D_2$. Similar
identities will be introduced in Section 4, in order to prove
Theorem \ref{theo:6.5}. Since these integral representations are
basic ingredients for our approach, we present here a proof of
Proposition \ref{prop:integr-repres}, which is more exhaustive
with respect to that given in \cite[Proof of Lemma 6.2]{ADiCMR14},
where some details were implied.
\begin{prop}
    \label{prop:integr-repres}
    Let $D_i$ and $\C^{D_i}$, $i=1,2$, satisfy the above assumptions.
    Then, for every $y$, $w \in \R^3$, $y \neq w$, and for every $l$, $m\in \R^3$ we have
\begin{multline}
  \label{integr-formula-D2_D1}
   \left(\Gamma^{D_2}- \Gamma^{D_1}\right)(y,w)m \cdot l = \\
    =\int_{\Omega} \C^1 \nabla \Gamma^{D_1}(\cdot,y)l \cdot \nabla
   \Gamma^{D_2}(\cdot,w)m - \int_{\Omega} \C^2 \nabla \Gamma^{D_1}(\cdot,y)l \cdot \nabla
   \Gamma^{D_2}(\cdot,w)m.
\end{multline}

\end{prop}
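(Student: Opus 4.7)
The plan is to apply an integration-by-parts argument in the spirit of Alessandrini, using the two fundamental matrices as mutual test functions. Set $u(x):=\Gamma^{D_1}(x,y)l$ and $v(x):=\Gamma^{D_2}(x,w)m$; by Proposition \ref{prop:8.1} and \eqref{eq:8.1.new} these satisfy $\divrg(\C^1\nabla u)=-l\,\delta(\cdot-y)$ and $\divrg(\C^2\nabla v)=-m\,\delta(\cdot-w)$ in $\R^3$, with decay $|u|,|v|=O(|x|^{-1})$ and $|\nabla u|,|\nabla v|=O(|x|^{-2})$ at infinity. For $R$ large and $\epsilon>0$ small, chosen so that $B_\epsilon(y)\cap B_\epsilon(w)=\emptyset$ and both balls lie in $B_R$, I would work on the punctured domain $\Omega_{R,\epsilon}:=B_R\setminus(\overline{B_\epsilon(y)}\cup\overline{B_\epsilon(w)})$, on which $u$ and $v$ are classical solutions of the respective homogeneous elasticity systems. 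Testing the equation for $u$ against $v$ and the equation for $v$ against $u$ via Green's identity, subtracting, and exploiting the major symmetry of $\C^2$, I would arrive at
\begin{equation*}
\int_{\Omega_{R,\epsilon}}(\C^1-\C^2)\nabla u\cdot\nabla v
=\int_{\partial\Omega_{R,\epsilon}}\bigl[v\cdot(\C^1\nabla u)n-u\cdot(\C^2\nabla v)n\bigr],
\end{equation*}
where $n$ denotes the outer unit normal to $\Omega_{R,\epsilon}$.

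Next, I would pass to the limits $R\to\infty$ and $\epsilon\to 0$. On the left-hand side, $\C^1-\C^2$ is supported in $\overline{D_1\cup D_2}\subset\overline{\Omega}$, so the integral equals $\int_{\Omega\setminus(B_\epsilon(y)\cup B_\epsilon(w))}(\C^1-\C^2)\nabla u\cdot\nabla v$ as soon as $R$ is large. Since $\nabla v$ is bounded near $y$ and $\nabla u$ near $w$, the integrand presents only a locally integrable singularity of order $|x-y|^{-2}+|x-w|^{-2}$, and the integral converges to $\int_{\Omega}(\C^1-\C^2)\nabla u\cdot\nabla v$ as $\epsilon\to 0$. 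On $\partial B_R$ the integrand is $O(R^{-3})$ while the surface area is $O(R^2)$, so that boundary contribution vanishes as $R\to\infty$.

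The heart of the argument lies in the small spheres. On $\partial B_\epsilon(y)$ the outer normal to $\Omega_{R,\epsilon}$ is $-n_y$, with $n_y$ the outer normal to $B_\epsilon(y)$. Applying Green's identity inside $B_\epsilon(y)$ together with the distributional equation $\divrg(\C^1\nabla u)=-l\,\delta(\cdot-y)$ and the smoothness of $v$ near $y$ (recall $y\neq w$), one finds $\int_{\partial B_\epsilon(y)}v\cdot(\C^1\nabla u)n_y\to -l\cdot v(y)$, while $\int_{\partial B_\epsilon(y)}u\cdot(\C^2\nabla v)n_y=O(\epsilon)$ because $|u|=O(\epsilon^{-1})$ and $|\nabla v|$ is bounded near $y$. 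Restoring the orientation factor, the $\partial B_\epsilon(y)$ piece tends to $l\cdot\Gamma^{D_2}(y,w)m$. A mirror computation at $\partial B_\epsilon(w)$ yields $-m\cdot\Gamma^{D_1}(w,y)l$, which by \eqref{eq:8.3.new} equals $-\Gamma^{D_1}(y,w)m\cdot l$. Summing, the total boundary contribution is $(\Gamma^{D_2}-\Gamma^{D_1})(y,w)m\cdot l$, and equating with the volume side delivers \eqref{integr-formula-D2_D1}. The main obstacle I anticipate is purely bookkeeping, namely keeping track of sign changes produced by the normal orientations at the small spheres and by the transpose in \eqref{eq:8.3.new}; the asymptotic analysis near $y$ and $w$ is otherwise routine and reduces to the distributional defining property of the fundamental matrix paired with a smooth test field.
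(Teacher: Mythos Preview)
Your argument is correct and rests on the same integration-by-parts mechanism as the paper, but it is organized differently. The paper first establishes two separate identities over all of $\R^3$,
\[
\int_{\R^3}\C^1\nabla\Gamma^{D_1}(\cdot,y)l\cdot\nabla\Gamma^{D_2}(\cdot,w)m=\Gamma^{D_2}(y,w)m\cdot l,
\qquad
\int_{\R^3}\C^2\nabla\Gamma^{D_1}(\cdot,y)l\cdot\nabla\Gamma^{D_2}(\cdot,w)m=\Gamma^{D_1}(y,w)m\cdot l,
\]
each obtained by inserting a truncated version of one fundamental solution into the weak formulation \eqref{eq:P2} of the other and showing that the cutoff errors near the pole and at infinity vanish; subtraction then gives \eqref{integr-formula-D2_D1}. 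You instead couple the two systems from the start via a bilinear Green identity on a doubly punctured ball and read off both point contributions in one pass. The paper's decomposition has the side benefit that the intermediate identities are themselves reused later (the analogous formulas \eqref{2.3}--\eqref{3.3}, \eqref{1.7}, \eqref{3.10} in the proof of Theorem~\ref{theo:6.5}), while your combined identity is slightly quicker but does not isolate these pieces. One minor caution: since $\C^1$ and $\C^2$ jump across $\partial D_1$ and $\partial D_2$, the phrase ``classical solutions'' is inaccurate; the integration-by-parts step should be justified either through the $H^1$ weak formulation (as the paper does) or via the piecewise $C^{1,\alpha'}$ regularity of Li--Nirenberg together with the transmission conditions, so that interface contributions cancel.
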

\begin{proof}
Formula \eqref{integr-formula-D2_D1} is obtained by subtracting the two following identities
\begin{equation}
  \label{integr-formula-gamma2}
   \int_{\R^3} \C^1 \nabla \Gamma^{D_1}(\cdot,y)l \cdot \nabla
   \Gamma^{D_2}(\cdot,w)m = \Gamma^{D_2}(y,w)m \cdot l,
\end{equation}
\begin{equation}
  \label{integr-formula-gamma1}
   \int_{\R^3} \C^2 \nabla \Gamma^{D_1}(\cdot,y)l \cdot \nabla
   \Gamma^{D_2}(\cdot,w)m = \Gamma^{D_1}(y,w)m \cdot l.
\end{equation}
To prove \eqref{integr-formula-gamma2},
let
\begin{equation}
  \label{eq:P1}
   \mathcal{H}= \{ f:\R^3 \rightarrow \R^3 \ | \ f \in C^0(\R^3,
   \R^3) \cap H^1(\R^3, \R^3), \ f\hbox{ with compact support} \}.
\end{equation}
By the weak formulation of \eqref{eq:8.1.new} (with $D=D_1$), we
have
\begin{equation}
  \label{eq:P2}
   \int_{\R^3} \C^1 \nabla \Gamma^{D_1} (\cdot, y) l \cdot \nabla
   \varphi = \varphi(y) \cdot l, \quad \hbox{for every } \varphi
   \in \mathcal{H}.
\end{equation}
Let $\epsilon >0$, $R >0$, with $ \epsilon \leq \frac{|w-y|}{2}$,
$R \geq 2\max \{|y|,|w| \}$,  and choose $\varphi \in \mathcal{H}$
such that $supp(\varphi) \subset B_{2R}(0)$ and
$\varphi|_{B_R(0)\setminus B_\epsilon(w) } \equiv
\Gamma^{D_2}(\cdot, w)m$. Then, \eqref{eq:P2} can be rewritten as
\begin{equation}
  \label{eq:P3}
    I_{\epsilon,R} + I_\epsilon + I_{R,2R} = \Gamma^{D_2}(y,w)m \cdot l,
\end{equation}
where
\begin{equation}
  \label{eq:P4}
   I_{\epsilon,R}= \int_{B_R(0)\setminus B_\epsilon(w)} \C^1 \nabla \Gamma^{D_1} (\cdot, y) l \cdot \nabla
   \Gamma^{D_2} (\cdot, w) m,
\end{equation}
\begin{equation}
  \label{eq:P5}
   I_{\epsilon}= \int_{B_\epsilon(w)} \C^1 \nabla \Gamma^{D_1} (\cdot, y) l \cdot \nabla
   \varphi ,
\end{equation}
\begin{equation}
  \label{eq:P6}
   I_{R,2R}= \int_{B_{2R}(0)\setminus B_R(0)} \C^1 \nabla \Gamma^{D_1} (\cdot, y) l \cdot \nabla
   \varphi .
\end{equation}
Integrating by parts on $B_\epsilon(w)$ and recalling that $y \in
\R^3 \setminus \overline{B}_\epsilon(w)$, we have
\begin{equation}
  \label{eq:P7}
   I_{\epsilon}= \int_{\partial B_\epsilon(w)} (\C^1 \nabla \Gamma^{D_1} (\cdot, y) l)\nu  \cdot
    \Gamma^{D_2} (\cdot, w) m.
\end{equation}
For every $x \in \partial B_\epsilon(w)$ and by our choice of
$\epsilon$, we have $|x-y| \geq |y-w|-|w-x|\geq \frac{|y-w|}{2}$. Therefore, by
\eqref{eq:8.4.new} and \eqref{eq:8.5.new}, we have
\begin{equation}
  \label{eq:P8}
   I_{\epsilon} \leq  C \int_{|x-w|=\epsilon} \frac{1}{|x-y|^2}
   \frac{1}{|x-w|}\leq \frac{C\epsilon}{|y-w|^2},
\end{equation}
where the constant $C>0$ only depends on $M_0$, $\alpha$,
$\alpha_0$, $\gamma_0$, $\overline{\lambda}$, $\overline{\mu}$,
$\tau$, $M$.

Analogously, integrating by parts in $B_{2R}(0)\setminus B_R(0)$
and recalling that $\varphi =0$ on $\partial B_{2R} (0)$ and $y
\in B_{ \frac{R}{2}  }(0)$, we have
\begin{equation}
  \label{eq:P9}
   I_{R,2R} = - \int_{\partial B_R(0)} (\C^1 \nabla \Gamma^{D_1} (\cdot, y) l)\nu  \cdot
   \Gamma^{D_2} (\cdot, w) m.
\end{equation}
For every $x \in \partial B_R(0)$ and by our choice of $R$, we
have $|x-w| \geq |x| - |w|\geq \frac{R}{2}$ and $|x-y| \geq \frac{R}{2}$.
Therefore,
\begin{equation}
  \label{eq:P10}
   I_{R,2R} \leq  C \int_{|x|=R} \frac{1}{|x-y|^2}
   \frac{1}{|x-w|}\leq \frac{C}{R},
\end{equation}
where the constant $C>0$ only depends on $M_0$, $\alpha$,
$\alpha_0$, $\gamma_0$, $\overline{\lambda}$, $\overline{\mu}$,
$\tau$, $M$.

Using the estimates \eqref{eq:P8} and \eqref{eq:P10} in
\eqref{eq:P3}, and taking the limit as $\epsilon \rightarrow 0$
and $R \rightarrow \infty$, we obtain
\eqref{integr-formula-gamma2}.
Symmetrically, we obtain
\begin{equation}
  \label{integr-formula-gamma1-new}
   \int_{\R^3} \C^2 \nabla \Gamma^{D_1}(\cdot,y)l \cdot \nabla
   \Gamma^{D_2}(\cdot,w)m = \Gamma^{D_1}(w,y)l \cdot m.
\end{equation}
By using \eqref{eq:8.3.new}, we obtain
\eqref{integr-formula-gamma1}.
\end{proof}

\medskip

Let $P$, $P \in \partial D_1$, be the point introduced in Lemma
\ref{lem:7.1}. In the following two theorems, we use a cartesian
coordinate system such that $P \equiv  O = (0,0,0)$ and $\nu =
-e_3$, where $\nu$ is the unit outer normal to $D_1$ at $P$.
\begin{theo} [Upper bound on $(\Gamma^{D_2}-\Gamma^{D_1})$]
   \label{theo:9.1}
   Under the notation of Lemma \ref{lem:7.1}, let
\begin{equation}
  \label{eq:9.1.new}
     y_h = P - h e_3,
\end{equation}
\begin{equation}
  \label{eq:9.2.new}
     w_h = P -\lambda_w h e_3, \quad 0<\lambda_w < 1,
\end{equation}
with
\begin{equation}
  \label{eq:9.3.new}
     0<h\leq \overline{h}\rho_0,
\end{equation}
where $\overline{h}$ only depends on $M_0$ and $\alpha$.

Then, for every $l$, $m\in\R^3$,
$|l|=|m|=1$, we have
\begin{equation}
  \label{eq:10.1.new}
     |(\Gamma^{D_2}-\Gamma^{D_1})(y_h, w_h) m \cdot l | \leq \frac{C}{\lambda_w h } \epsilon ^{ C_1 \left (
     \frac{h}{\rho_0}\right)^{C_2}},
\end{equation}
where the positive constants $C$, $C_1$ and $C_2$  only depend on $M_0$, $\alpha$,
$M_1$, $\alpha_0$, $\gamma_0$, $\overline{\lambda}$,
$\overline{\mu}$, $\tau$ and $M$.
\end{theo}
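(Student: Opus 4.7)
The plan is to combine the integral representation \eqref{integr-formula-D2_D1} with a propagation of smallness along the path $\gamma$ of Lemma~\ref{lem:7.1}. The key structural observation is that, on the connected open set $\mathcal{G}$, both tensors $\C^1$ and $\C^2$ in \eqref{eq:elasticity-tensors} coincide with the background $\C$; hence, for any fixed $\tilde{w}\in\mathcal{G}$, the map $v(x):=(\Gamma^{D_2}-\Gamma^{D_1})(x,\tilde{w})m$ solves the homogeneous Lam\'e system with $C^{1,1}$ coefficients on $\mathcal{G}\setminus\{\tilde{w}\}$, and the symmetry \eqref{eq:8.3.new} yields the analogous statement in the second variable.

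I would first establish the bound at base points $\tilde{y},\tilde{w}\in S_{2\rho_0}$. Since both points lie outside $\overline{\Omega}$, the functions $u_1:=\Gamma^{D_1}(\cdot,\tilde{y})l$ and $u_2:=\Gamma^{D_2}(\cdot,\tilde{w})m$ are smooth in a neighbourhood of $\overline{\Omega}$ and satisfy $\divrg(\C^1\nabla u_1)=0$ and $\divrg(\C^2\nabla u_2)=0$ on $\Omega$ respectively, with $H^{1/2}(\partial\Omega)$-traces bounded by universal constants via Proposition~\ref{prop:8.1}. Combining \eqref{integr-formula-D2_D1} with the elasticity version of Alessandrini's identity
\[
\int_\Omega(\C^1-\C^2)\nabla u_1\cdot\nabla u_2 = \langle(\Lambda_{D_1}-\Lambda_{D_2})u_1|_{\partial\Omega},\,u_2|_{\partial\Omega}\rangle,
\]
together with \eqref{eq:6.1.new}, I obtain $|(\Gamma^{D_2}-\Gamma^{D_1})(\tilde{y},\tilde{w})m\cdot l|\leq C\epsilon/\rho_0$; letting $\tilde y$ range in a ball of controlled size in $S_{2\rho_0}$ upgrades this to an $L^2$-smallness of $v$ on a ball inside $S_{2\rho_0}$.

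The next step is to propagate this smallness from $S_{2\rho_0}$ down to $y_h$. By Lemma~\ref{lem:7.1}, $V(\gamma,\overline{d},\overline{R})\subset\mathcal{G}$, so the three-sphere inequality for the Lam\'e system with $C^{1,1}$ coefficients can be chained along a finite sequence of overlapping balls of radius $\sim\overline{R}$ covering the tubular part of $V(\gamma,\overline{d},\overline{R})$. Since the number of such balls is bounded in terms of the a priori data, this yields $\|v\|_{L^2(B_{\overline{R}}(P+\overline{d}\nu))}\leq C\epsilon^{\theta_0}$ for some $\theta_0\in(0,1)$. Inside the truncated cone $C(P,\nu,(\overline{d}^{\,2}-\overline{R}^{\,2})/\overline{d},\arcsin(\overline{R}/\overline{d}))$ I would then iterate further on a chain of balls centred on the axis $\{P+t\nu:t>0\}$ with radii in geometric progression, each tangent to the cone's lateral boundary; reaching a ball of radius comparable to $h$ around $y_h$ requires $N\sim\log(\rho_0/h)$ steps, so the exponent compounds to $\theta_0^N\sim(h/\rho_0)^{C_2}$, giving $\|v\|_{L^2(B_{h/4}(y_h))}\leq C\epsilon^{C_1(h/\rho_0)^{C_2}}$. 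A symmetric iteration in the second variable, fixing the first argument at $y_h$ and letting the second travel from $S_{2\rho_0}$ to $w_h$, preserves this exponent. Finally, interior Schauder estimates applied to $v$, which solves a homogeneous Lam\'e equation on a ball of radius $\sim\lambda_w h$ around $w_h$ avoiding $y_h$, convert the $L^2$-smallness into the desired pointwise bound, the factor $1/(\lambda_w h)$ emerging from the natural rescaling of these local regularity estimates.

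The main obstacle is the quantitative bookkeeping of the iteration inside the cone: the chain of balls must be chosen with each ball well contained in the cone and tangent to its lateral boundary, with radii in geometric progression, so that the resulting exponent $(h/\rho_0)^{C_2}$ depends only on the cone's aperture and hence only on the a priori constants $M_0$ and $\alpha$. A secondary but necessary point is the uniform $H^{1/2}(\partial\Omega)$-control of the traces of $u_1,u_2$ at the base points, which follows from Proposition~\ref{prop:8.1} thanks to the uniform distance of $\tilde{y},\tilde{w}$ from $\overline\Omega$.
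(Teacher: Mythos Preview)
Your proposal is correct and follows essentially the same route as the paper: the paper itself only sketches the argument (referring to \cite[Section~7]{ADiCMR14} for details), deriving the base estimate \eqref{eq:25.1} from \eqref{integr-formula-D2_D1} combined with Alessandrini's identity \eqref{eq:Alessandrini} and then propagating it to $y_h,w_h$ by iterated three-sphere inequalities along the tubular-plus-conical set $V(\gamma,\overline{d},\overline{R})$ of Lemma~\ref{lem:7.1}. Your description of the geometric chain of balls in the cone, the resulting exponent $(h/\rho_0)^{C_2}$, the two-stage propagation in each variable, and the final interior estimate producing the factor $1/(\lambda_w h)$ is exactly the mechanism underlying the cited proof.
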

For the proof of the above result, we refer to \cite[Section
7]{ADiCMR14}. To give an idea of the role played by Proposition
\ref{prop:integr-repres} in proving estimate \eqref{eq:10.1.new},
let us recall Alessandrini's identity

\begin{equation}
  \label{eq:Alessandrini}
   \int_\Omega \C^1 \nabla u_1 \cdot \nabla u_2
   - \int_\Omega\C^2 \nabla u_1 \cdot \nabla
   u_2
   = <(\Lambda_{D_1} -
   \Lambda_{D_2})u_2,u_1>,
\end{equation}
which holds for every pair of solutions $u_i \in H^1(\Omega)$ to
\eqref{eq:intro-1} with $D=D_i$, $i=1,2$.

By choosing in the above identity $u_1(\cdot) =
\Gamma^{D_1}(\cdot,y)l$, $u_2(\cdot) = \Gamma^{D_2}(\cdot,w)m$ with
$y$, $w \in S_{2\rho_0}$,
the first member of \eqref{eq:Alessandrini} coincides with the second member of
\eqref{integr-formula-D2_D1}, so that,
recalling the asymptotic estimate
\eqref{eq:8.4.new} and the hypothesis \eqref{eq:6.1.new}, we obtain the following smallness estimate
\begin{equation}
  \label{eq:25.1}
   |(\Gamma^{D_2}- \Gamma^{D_1})(y,w)m\cdot l| \leq  C\frac{\epsilon}{\rho_0}, \quad\hbox{ for every }
    y, w \in S_{2\rho_0},
\end{equation}
where $C>0$ only depends on $M_0$, $\alpha$, $M_1$, $\alpha_0$,
$\gamma_0$, $\overline{\lambda}$, $\overline{\mu}$, $\tau$, $M$.

This first smallness estimate is then propagated up to the points $y_h$, $w_h$, with a technical
construction based on iterated application of the three spheres inequality.

\begin{theo}[Lower bound on the function $(\Gamma^{D_2}-\Gamma^{D_1})$]
   \label{theo:6.5}

Under the notation of Lemma \ref{lem:7.1}, let
\begin{equation}
  \label{eq:11.1.new}
     y_h = P -h e_3.
\end{equation}
For every $i=1,2,3$, there exists $\lambda_w \in \left\{
\frac{2}{3}, \frac{3}{4}, \frac{4}{5} \right\}$ and there exists
$\widetilde{h} \in \left (0, \frac{1}{2} \right )$ only depending
on $M_0$, $\alpha$, $\alpha_0$, $\gamma_0$, $\overline{\lambda}$,
$\overline{\mu}$, $\eta_0$, $\tau$, $M$, such that
\begin{equation}
  \label{eq:11.2.new}
     |(\Gamma^{D_2}-\Gamma^{D_1})(y_h, w_h) e_i \cdot e_i)| \geq \frac{C}{h}, \quad \hbox{for every } h, \
     0<h<\widetilde{h}\,dist(P,D_2),
\end{equation}
where
\begin{equation}
  \label{eq:11.3.new}
     w_h = P -\lambda_w h e_3,
\end{equation}
and $C>0$ only depends on $M_0$, $\alpha$, $M_1$, $\alpha_0$, $\gamma_0$,
$\overline{\lambda}$, $\overline{\mu}$, $\tau$, $M$ and $\eta_0$.
\end{theo}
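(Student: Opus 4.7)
The plan is to extract the leading $1/h$ singularity from the integral identity \eqref{integr-formula-D2_D1} by a careful localization and blow-up near $P$. By Lemma \ref{lem:7.1}, $P$ lies at positive distance from $D_2$, so there exists $r_P>0$ with $\overline{B_{r_P}(P)}\cap\overline{D_2}=\emptyset$; on this ball $\C^1-\C^2=(\C^{D_1}-\C)\chi_{D_1}$. The contribution to \eqref{integr-formula-D2_D1} from $\Omega\setminus B_{r_P}(P)$ stays uniformly bounded in $h$, because for $h$ small $y_h,w_h\in B_{r_P/2}(P)$ and the bounds \eqref{eq:8.4.new}--\eqref{eq:8.5.new} keep the integrand integrable there. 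Hence the $1/h$ blow-up can only come from
\begin{equation*}
I_{\mathrm{loc}}(h)=\int_{D_1\cap B_{r_P}(P)}(\C^{D_1}-\C)\nabla\Gamma^{D_1}(\cdot,y_h)e_i\cdot\nabla\Gamma^{D_2}(\cdot,w_h)e_i.
\end{equation*}

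The second step is the blow-up $x=P+hz$, which turns $I_{\mathrm{loc}}(h)$ into $h^{-1}$ times an integral over $h^{-1}((D_1\cap B_{r_P}(P))-P)$. As $h\to 0$ this set flattens to the half-space $\{z_3>0\}$ thanks to the $C^{1,\alpha}$ assumption on $\partial D_1$, the coefficient difference $\C^{D_1}-\C$ freezes to $\C^{D_1}(P)-\C(P)$ with error $O(h^\tau)$ from \eqref{eq:4.3.new}, and the rescaled fundamental matrices $h\,\Gamma^{D_i}(P+hz,P+hz_0)$ should converge respectively to the flat-interface bi-material Somigliana matrix $\Gamma^{P,1}_0$ with constant tensors $\C(P)$, $\C^{D_1}(P)$, and to the full-space Kelvin--Somigliana matrix $\Gamma^{P,2}_0$ with constant tensor $\C(P)$ (since $P$ is far from $D_2$). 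Rather than establishing this pointwise asymptotic expansion, which under the present low regularity would be delicate, I would express the differences $\Gamma^{D_i}-\Gamma^{P,i}_0$ through integral identities of the same type as \eqref{integr-formula-D2_D1}, comparing the operator with tensor $\C+(\C^{D_i}-\C)\chi_{D_i}$ to the frozen flat-interface operator, and estimate them directly; this produces an error of order $h^{-1+\theta}$ for some $\theta>0$ depending only on the a priori data. The leading term becomes $h^{-1}\Phi_i(\lambda_w)$ with
\begin{equation*}
\Phi_i(\lambda_w)=\int_{\{z_3>0\}}(\C^{D_1}(P)-\C(P))\nabla_z\Gamma^{P,1}_0(z,-e_3)e_i\cdot\nabla_z\Gamma^{P,2}_0(z,-\lambda_w e_3)e_i\,dz.
\end{equation*}

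The main obstacle is the quantitative non-degeneracy of this principal term: for each $i\in\{1,2,3\}$ one must exhibit at least one $\lambda_w\in\{2/3,3/4,4/5\}$ for which $|\Phi_i(\lambda_w)|$ is bounded below by a positive constant depending only on $\eta_0$ and the a priori data. The integrand is $z$-homogeneous of degree $-4$, so $\Phi_i(\lambda_w)$ is finite, and using the explicit isotropic bi-material half-space Somigliana matrix one sees that $\Phi_i(\lambda_w)$ is a rational function of $\lambda_w\in(0,1)$ whose numerator is a low-degree polynomial in $\lambda_w$ with coefficients that are polynomials in $\delta\lambda:=\lambda(P)-\lambda^{D_1}(P)$ and $\delta\mu:=\mu(P)-\mu^{D_1}(P)$. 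The delicate point is to verify, by explicit computation, that at least one such coefficient is bounded below in absolute value by a positive multiple of $\eta_0$ whenever $\delta\lambda^2+\delta\mu^2\geq\eta_0^2$, thus separating the contributions proportional to $\delta\lambda$ from those proportional to $\delta\mu$. Once this is established, since a non-zero polynomial of degree at most two cannot vanish at all three distinct points $\{2/3,3/4,4/5\}$, a Vandermonde/interpolation argument selects the appropriate $\lambda_w$; combined with the error estimate from the previous step, this yields $|(\Gamma^{D_2}-\Gamma^{D_1})(y_h,w_h)e_i\cdot e_i|\geq C/h$ for $0<h<\widetilde h\,\mathrm{dist}(P,D_2)$, as claimed.
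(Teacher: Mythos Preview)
Your plan is close in spirit to the paper's, but it is organized differently and leaves the hardest ingredient unproved.

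\textbf{Comparison of the two routes.} The paper does not work inside the integral identity \eqref{integr-formula-D2_D1} at all. Instead it introduces four auxiliary fundamental matrices --- the full--space one $\Gamma_0$ for the frozen tensor $\C(P)$, the flat bi-material one $\Gamma_0^+$ for $\C(P)\chi_{\R^3_-}+\C^{D_1}(P)\chi_{\R^3_+}$, and the frozen-coefficient inclusion matrix $\Gamma_0^{D_1}$ --- and writes a pointwise triangle inequality
\[
|(\Gamma^{D_2}-\Gamma^{D_1})|\ \ge\ |(\Gamma_0^+-\Gamma_0)|\ -\ |(\Gamma^{D_2}-\Gamma)|\ -\ |(\Gamma-\Gamma_0)|\ -\ |(\Gamma_0^+-\Gamma_0^{D_1})|\ -\ |(\Gamma_0^{D_1}-\Gamma^{D_1})|.
\]
Each correction term is then bounded by $Ch^{-1}\,[(h/\rho_0)^{\min(\alpha,\tau)}+h/d(P,D_2)]$, using \emph{exactly} the integral identities you propose (``compare two operators via a representation like Proposition~\ref{prop:integr-repres}''), plus one interior regularity estimate for the term $\Gamma^{D_2}-\Gamma$. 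So what you call ``expressing $\Gamma^{D_i}-\Gamma^{P,i}_0$ through integral identities'' is already the paper's mechanism; the paper simply applies it at the level of the values of the matrices rather than inside your local integral $I_{\mathrm{loc}}(h)$, which makes the bookkeeping shorter and avoids any blow-up computation.

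\textbf{The genuine gap.} Your proof stops precisely where the real work lies: the quantitative lower bound for the frozen, flat-interface principal term. You assert that $\Phi_i(\lambda_w)$ is, after clearing denominators, a polynomial in $\lambda_w$ of degree at most two with at least one coefficient bounded below by a multiple of $\eta_0$, and then invoke a Vandermonde argument on $\{2/3,3/4,4/5\}$. Neither the degree bound nor the coefficient lower bound is obvious: the isotropic bi-material (Rongved) fundamental matrix is algebraically heavy, and verifying that for each $i$ only three test values of $\lambda_w$ suffice is a genuine computation. The paper does not redo it either; it quotes it as Lemma~\ref{lem:1.1}, which in turn rests on Proposition~9.3 and formula~(9.11) of \cite{ADiCMR14}. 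Until you either carry out that computation or cite it, your argument is incomplete. A minor side remark: your sentence ``the integrand is $z$-homogeneous of degree $-4$'' is not correct as stated, since the source points $-e_3$ and $-\lambda_w e_3$ are fixed; convergence of $\Phi_i$ follows instead from the sources lying outside $\{z_3>0\}$ together with the $|z|^{-4}$ decay at infinity.
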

The proof of this key result will be given in Section \ref{proof-Lower_bound}.

We are now in position to prove the main result of this paper.

\begin{proof} [Proof of Theorem \ref{theo:6.1}]
By the upper bound \eqref{eq:10.1.new}, with $l=m=e_i$ for
$i\in\{1,2,3\}$, and the lower bound \eqref{eq:11.2.new}, we have
\begin{equation}
  \label{eq:12.1.new}
   C \leq \epsilon ^{ C_1 \left (
     \frac{h}{\rho_0}\right)^{C_2}}, \quad \hbox{for every } h, \
     0<h\leq \min\{\overline{h}\rho_0, \widetilde{h}\,d(P,D_2)\}
\end{equation}
where $C, C_1, C_2$ only depend on $M_0$, $\alpha$, $M_1$, $\alpha_0$, $\gamma_0$,
$\overline{\lambda}$, $\overline{\mu}$, $\tau$, $M$ and $\eta_0$.
By our regularity assumptions on the domains, there exists $\widetilde{C}>0$, only depending on
$M_0$, $\alpha$, $M_1$, such that
\begin{equation}
  \label{diam}
d(P,D_2) \leq \hbox{diam}(\Omega)\leq \widetilde{C}\rho_0.
\end{equation}
Set $h^* = \min\left\{\frac{\overline{h}}{\widetilde{C}},
\widetilde{h}\right\}$. Then inequality \eqref{eq:12.1.new} holds
for every $h$ such that $h\leq h^* d(P,D_2)$, with $h^*$ only
depending on $M_0$, $\alpha$, $M_1$, $\alpha_0$, $\gamma_0$,
$\overline{\lambda}$, $\overline{\mu}$, $\tau$, $M$ and $\eta_0$.
Taking the logarithm in \eqref{eq:12.1.new} and recalling that
$\epsilon \in (0,1)$, we obtain
\begin{equation}
  \label{eq:12.2.new}
   h \leq C\rho_0 \left ( \frac{1}{ |\log \epsilon |  } \right
   )^{ \frac{1}{C_2} }, \quad \hbox{for every } h, \
     0<h\leq h^* d(P,D_2),
\end{equation}
In particular, choosing $h = h^* d(P,D_2)$, we have
\begin{equation}
  \label{eq:12.3.new}
   d(P,D_2) \leq C\rho_0 \left ( \frac{1}{ |\log \epsilon |  } \right
   )^{ \frac{1}{C_2} }.
\end{equation}
The thesis follows {}from Lemma \ref{lem:7.1}.
\end{proof}

\section{Proof of Theorem \ref{theo:6.5}}
\label{proof-Lower_bound}

Let us recall that we have chosen a cartesian coordinate system
with origin $P \equiv O$ and $e_3= - \nu$, where $\nu$ is the unit
outer normal to $D_1$ at $P$.

Let $\C_0 = \C(O)$ be the constant Lam\'{e} tensor, having
Lam\'{e} moduli $\lambda\equiv \lambda(O)$, $\mu\equiv \mu(O)$,
and let $\C^{D_1}_0 = \C^{D_1} (O)$ be the constant Lam\'{e}
tensor with Lam\'{e} moduli $\lambda\equiv \lambda^{D_1}(O)$,
$\mu\equiv \mu^{D_1}(O)$. Moreover, let us introduce the
elasticity tensors $\C^+_0 = \C_0 \chi_{\R^3_-} +
\C^{D_1}_0\chi_{\R^3_+}$, $\C^1_0 = \C_0 \chi_{\R^3\setminus D_1}
+ \C^{D_1}_0\chi_{D_1}$.

Let $\Gamma$, $\Gamma_0$, $\Gamma^+_0$, $\Gamma^{D_1}_0$ be the fundamental matrices associated to
the tensors $\C$, $\C_0$, $\C^+_0$, $\C^1_0$, respectively.

In the above notation, we may write, for every $m,l\in \R^3$, $|l|=|m|=1$,
\begin{multline}
  \label{1.1}
|(\Gamma^{D_2}-\Gamma^{D_1})(y_h,w_h)m\cdot l|\geq |(\Gamma^{+}_0-\Gamma_0)(y_h,w_h)m\cdot l|-
|(\Gamma^{D_2}-\Gamma)(y_h,w_h)m\cdot l|- \\-|(\Gamma-\Gamma_0)(y_h,w_h)m\cdot l|-
|(\Gamma^{+}_0-\Gamma^{D_1}_0)(y_h,w_h)m\cdot l|- \\-|(\Gamma^{D_1}_0-\Gamma^{D_1})(y_h,w_h)m\cdot l|.
\end{multline}
The following Lemma, which is a straightforward consequence of
Proposition 9.3 and formula $(9.11)$, derived in \cite{ADiCMR14},
gives a positive lower bound for the term
$|(\Gamma^{+}_0-\Gamma_0)(y_h,w_h)e_i\cdot e_i|$, $i=1,2,3$, for a
suitable $w_h$.

\begin{lem}
    \label{lem:1.1}
For every $i=1,2,3$,
there exists $\lambda_w \in \left\{ \frac{2}{3}, \frac{3}{4},
\frac{4}{5} \right\}$ such that
\begin{equation}
  \label{2.1}
     \left | (\Gamma^+_0(y_h,w_h)- \Gamma_0(y_h,w_h))e_i \cdot e_i \right |
     \geq
     \frac{\mathcal{C}}{h}, \quad \hbox{for every } h>0,
\end{equation}
where $\mathcal{C} > 0$ only depends on $\alpha_0$, $\gamma_0$,
$\overline{\lambda}$, $\overline{\mu}$, $\eta_0$.
\end{lem}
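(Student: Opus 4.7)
The plan is to leverage the explicit closed form for $\Gamma^+_0-\Gamma_0$ already derived in \cite{ADiCMR14}. The tensor $\C^+_0$ consists of two constant isotropic Lam\'e tensors $\C_0$, $\C^{D_1}_0$ separated by the flat plane $\{x_3=0\}$, so the associated fundamental matrix $\Gamma^+_0$ admits a semi-explicit representation -- for instance via partial Fourier transform in the transverse variables $x'$ -- and its difference with the free-space Kelvin--Somigliana matrix $\Gamma_0$ can be written in closed form when both arguments lie on the normal axis $\{x_1=x_2=0,\ x_3<0\}$. Proposition $9.3$ and formula $(9.11)$ of \cite{ADiCMR14} deliver precisely such an expression, with coefficients depending rationally on the Lam\'e constants $\lambda(O),\mu(O),\lambda^{D_1}(O),\mu^{D_1}(O)$.

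Substituting $y_h=-he_3$ and $w_h=-\lambda_w he_3$ into that formula, the $h$-dependence factors out by the $|x|^{-1}$-homogeneity of the free-space fundamental matrix, and each diagonal entry reduces to the form
\begin{equation*}
(\Gamma^+_0-\Gamma_0)(y_h,w_h)e_i\cdot e_i=\frac{g_i(\lambda_w)}{h},
\end{equation*}
where $g_i$ is a rational function of $\lambda_w\in(0,1)$ whose numerator is a low-degree polynomial in $\lambda_w$ with coefficients depending polynomially on the Lam\'e moduli and, in particular, on the jumps $(\lambda-\lambda^{D_1})(O)$ and $(\mu-\mu^{D_1})(O)$. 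The contrast condition \eqref{eq:4.1.new} evaluated at $O$ guarantees that this numerator is not identically zero: one of the two jumps exceeds $\eta_0/\sqrt{2}$ in absolute value, which forbids cancellation of all coefficients of $g_i$.

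It remains to rule out the simultaneous vanishing of the numerator of $g_i$ at all three nodes $\lambda_w\in\{2/3,3/4,4/5\}$. Since inspection of $(9.11)$ bounds the degree of the numerator in $\lambda_w$ by two, three distinct interpolation nodes are enough to force at least one of them to satisfy $|g_i(\lambda_w)|\geq\mathcal{C}$. The main obstacle, assuming the external formula $(9.11)$ as a black box, is upgrading this node-wise non-vanishing to a \emph{uniform} lower bound over the admissible parameter region. One argues by compactness: the parameter set
\begin{equation*}
\mathcal{P}=\{(\lambda,\mu,\lambda^{D_1},\mu^{D_1})\in\R^4:\text{\eqref{eq:3.1.new}, \eqref{eq:3.2.new}, \eqref{eq:4.1.new} hold at }O\}
\end{equation*}
is closed and bounded in $\R^4$, hence compact, and the map
\begin{equation*}
(\lambda,\mu,\lambda^{D_1},\mu^{D_1})\longmapsto\max_{\lambda_w\in\{2/3,3/4,4/5\}}|g_i(\lambda_w)|
\end{equation*}
is continuous and strictly positive on $\mathcal{P}$ by the previous argument, hence bounded below by some $\mathcal{C}>0$ depending only on $\alpha_0,\gamma_0,\overline\lambda,\overline\mu,\eta_0$. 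Dividing by $h$ yields \eqref{2.1}. The whole argument is algebraic once the explicit formula $(9.11)$ is accepted, so the only real obstacle lies in checking that the three chosen nodes exceed the maximum possible number of zeros of $g_i$ in $(0,1)$ coming from the structure of $(9.11)$.
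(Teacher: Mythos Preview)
Your approach is essentially the paper's: the paper does not give a proof at all but simply states that the lemma ``is a straightforward consequence of Proposition~9.3 and formula~(9.11), derived in \cite{ADiCMR14}.'' You cite the same two ingredients and then expand on why the conclusion follows --- the $1/h$ scaling from homogeneity, the finite-degree structure of $g_i(\lambda_w)$ preventing simultaneous vanishing at three nodes, and the compactness argument to make the lower bound uniform in the Lam\'e parameters. This is a faithful unpacking of what the authors leave implicit, and the logic is sound provided the degree bound you read off from $(9.11)$ is correct, which you rightly flag as the one point requiring direct inspection of that formula.
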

{}From now on, let $\lambda_w$ be chosen accordingly to the above
lemma and let $h \leq \frac{1}{2}\min \{d(P,D_2), \frac{\rho_0}{
\sqrt{1+M_0^2} } \}$.

\medskip
\noindent
\emph{Term $\Gamma^{D_2}-\Gamma$.}

Let us consider the vector valued function
\begin{equation}
  \label{1.2}
     v(x)= (\Gamma^{D_2}-\Gamma)(x,w_h)m.
\end{equation}

Let us set $\rho= d(P,D_2)$. Since $d(w_h,P) =\lambda_w h\leq
h\leq \frac{\rho}{2}$, we have that $d(w_h,D_2)\geq d(P,D_2)
-d(w_h,P)\geq \frac{\rho}{2}$. Therefore $v(x)$ is a solution to
the  Lam\'{e} system
\begin{equation}
  \label{2.2}
     \divrg _x(\C\nabla_xv(x))=0, \quad\hbox{in } B_{\frac{\rho}{2}}(w_h).
\end{equation}
By the regularity estimate
\begin{equation}
  \label{3.2}
     \sup_{B_{\frac{\rho}{4}}(w_h)}|v(x)|\leq \frac{C}{\rho^{\frac{3}{2}}}
        \left(\int_{B_{\frac{\rho}{2}}(w_h)}|v(x)|^2\right)^{\frac{1}{2}},
\end{equation}
with $C$ only depending on $\alpha_0$, $\gamma_0$, $\overline{\lambda}$, $\overline{\mu}$,
and by applying the asymptotic estimates \eqref{eq:8.4.new} to $\Gamma^{D_2}$ and $\Gamma$, it follows that
\begin{equation}
  \label{4.2}
     \sup_{B_{\frac{\rho}{4}}(w_h)}|v(x)|\leq \frac{C}{\rho},
\end{equation}
where $C>0$ only depends on $M_0$, $\alpha$, $\alpha_0$, $\gamma_0$, $\overline{\lambda}$,
$\overline{\mu}$, $\tau$, $M$.

Since $d(y_h,w_h)=(1-\lambda_w)h\leq \frac{h}{3}\leq \frac{\rho}{6}$,
$y_h\in B_{\frac{\rho}{4}}(w_h)$ and
\begin{equation}
  \label{5.2}
     \left|(\Gamma^{D_2}-\Gamma)(y_h,w_h)m\cdot l\right| = |v(y_h)\cdot l|\leq \frac{C}{\rho}
        = \frac{C}{d(P,D_2)},
\end{equation}
for every $l,m\in \R^3$, $|l| = |m| =1$, with $C$ only depending on $M_0$, $\alpha$, $\alpha_0$,
$\gamma_0$, $\overline{\lambda}$, $\overline{\mu}$, $\tau$, $M$.

\medskip
\noindent
\emph{Term $\Gamma^{D_1}_0-\Gamma^{D_1}$.}

By the same arguments seen in the proof of Proposition
\ref{prop:integr-repres}, we have that, for every $y,w \in \R^3$,
$y \neq w$, and for every $l,m \in \R^3$,
\begin{equation}
  \label{2.3}
   \int_{\R^3} \C^1 \nabla \Gamma^{D_1}(\cdot,y)l \cdot \nabla
   \Gamma^{D_1}_0(\cdot,w)m = \Gamma^{D_1}_0(y,w)m \cdot l,
\end{equation}
\begin{equation}
  \label{3.3}
   \int_{\R^3} \C^1_0 \nabla \Gamma^{D_1}(\cdot,y)l \cdot \nabla
   \Gamma^{D_1}_0(\cdot,w)m = \Gamma^{D_1}(y,w)m \cdot l.
\end{equation}
Choosing $y=y_h$ and $w=w_h$, we have
\begin{equation}
  \label{1.3}
   (\Gamma^{D_1}_0-\Gamma^{D_1})(y_h,w_h)m\cdot l =\int_{\R^3} (\C^1-\C^1_0) \nabla \Gamma^{D_1}(\cdot,y_h)l \cdot \nabla
   \Gamma^{D_1}_0(\cdot,w_h)m = J+J_0,
\end{equation}
with
\begin{equation}
  \label{1.3bis}
   J= \int_{D_1}  (\C^{D_1}-\C^{D_1}_0)\nabla \Gamma^{D_1}(\cdot,y_h)l \cdot \nabla
   \Gamma^{D_1}_0(\cdot,w_h)m,
\end{equation}
\begin{equation}
  \label{1.3ter}
   J_0= \int_{\R^3\setminus D_1} (\C-\C_0) \nabla \Gamma^{D_1}(\cdot,y_h)l \cdot \nabla
   \Gamma^{D_1}_0(\cdot,w_h)m.
\end{equation}

Let us estimate $J$. We have trivially
\begin{equation}
  \label{2.4}
   |J|\leq C(I_1+I_2),
\end{equation}
where $C>0$ only depends on $M_0$, $\alpha$, $\alpha_0$, $\gamma_0$, $\overline{\lambda}$,
$\overline{\mu}$, $\tau$, $M$ and
\begin{equation}
  \label{3.4}
   I_1=  \int_{|x|\geq \rho_0} \frac{|(\C^{D_1}-\C^{D_1}_0)(x)|}{|x-y_h|^2|x-w_h|^2},
\end{equation}
\begin{equation}
  \label{4.4}
   I_2=  \int_{|x|\leq \rho_0} \frac{|(\C^{D_1}-\C^{D_1}_0)(x)|}{|x-y_h|^2|x-w_h|^2}.
\end{equation}
Let us first estimate $I_1$. Since $h\leq \frac{\rho_0}{2}$ and
$|x|\geq \rho_0$, we have that $|x-y_h|\geq |x| - |y_h| = |x| - h
\geq \frac{|x|}{2}$ and similarly $|x-w_h|\geq \frac{|x|}{2}$, so
that
\begin{equation}
  \label{5.4}
   I_1 \leq C  \int_{|x|\geq \rho_0} \frac{1}{|x|^4} = \frac{C}{\rho_0},
\end{equation}
with $C$ only depending on $\overline{\lambda}$, $\overline{\mu}$.
To estimate $I_2$, we use the fact that
\begin{equation}
  \label{1.5}
   |(\C^{D_1}-\C^{D_1}_0)(x)| = |\C^{D_1}(x) - \C^{D_1}(O)| \leq \frac{C}{\rho_0^\tau}|x|^\tau,
\end{equation}
with $C$ only depending on $M$, so that
\begin{equation}
  \label{2.5}
  I_2\leq \frac{C}{\rho_0^\tau} (I'_2 + I''_2),
\end{equation}
where
\begin{equation}
  \label{3.5}
  I'_2 = \int_A \frac{|x|^\tau}{|x-y_h|^2|x-w_h|^2},
\end{equation}
\begin{equation}
  \label{4.5}
  I''_2 = \int_B \frac{|x|^\tau}{|x-y_h|^2|x-w_h|^2},
\end{equation}
with $A=\{|x|\leq \rho_0, |x|<6|y_h-w_h|\}$, $B=\{6|y_h-w_h|\leq
|x|\leq \rho_0\}$.

We perform the change of variables $x= |y_h-w_h|z$ in $I'_2$, obtaining
\begin{equation}
  \label{5.5}
  I'_2 \leq 6^\tau |y_h-w_h|^{\tau-1}\int_{|z|\leq 6}
    \left(z-\frac{y_h}{|y_h-w_h|}\right)^{-2}\left(z-\frac{w_h}{|y_h-w_h|}\right)^{-2}.
\end{equation}
Since the integral on the right hand side is bounded by an absolute constant, see
\cite[Chapter 2, Section 11]{Mir70}, we have that
\begin{equation}
  \label{6.5}
  I'_2 \leq C |y_h-w_h|^{\tau-1},
\end{equation}
with $C$ only depending on $\tau$.

For every $x\in B$, we have
\begin{equation}
  \label{1.6}
  |x|\geq 6|y_h-w_h| = 6h(1-\lambda_w)\geq \frac{6}{5}h,
\end{equation}
so that
\begin{equation}
  \label{2.6}
  |x|\leq |x-y_h|+|y_h| = |x-y_h| +h \leq |x-y_h| + \frac{5}{6}|x|.
\end{equation}
Hence
\begin{equation}
  \label{3.6}
  \frac{1}{6}|x|\leq |x-y_h|,
\end{equation}
and, similarly,
\begin{equation}
  \label{4.6}
  \frac{1}{6}|x|\leq |x-w_h|.
\end{equation}
By \eqref{3.6}--\eqref{4.6}, we have
\begin{equation}
  \label{5.6}
  I''_2\leq 6^4\int_B|x|^{\tau-4}\leq C \int_{6|y_h-w_h|}^{\rho_0} r^{\tau-2}dr\leq C|y_h-w_h|^{\tau-1},
\end{equation}
where $C$ is an absolute constant.

{}From \eqref{2.4}, \eqref{5.4}, \eqref{2.5}, \eqref{6.5}, \eqref{5.6} and noticing that
$|y_h-w_h| = h(1-\lambda_w)\geq \frac{h}{5}$, we have
\begin{equation}
  \label{6.6}
  |J| \leq \frac{C}{h} \left(\frac{h}{\rho_0}+
    \left(\frac{h}{\rho_0}\right)^\tau\right),
\end{equation}
where $C$ only depends on $M_0$, $\alpha$, $\alpha_0$, $\gamma_0$,
$\overline{\lambda}$, $\overline{\mu}$, $\tau$, $M$.

The term $J_0$ is estimated analogously with $\tau$ replaced by $1$, and therefore, by \eqref{1.3},
\begin{equation}
  \label{6.6bis}
  |(\Gamma^{D_1}_0-\Gamma^{D_1})(y_h,w_h)m\cdot l| \leq \frac{C}{h} \left(\frac{h}{\rho_0}+
    \left(\frac{h}{\rho_0}\right)^\tau\right),
\end{equation}
where $C$ only depends on $M_0$, $\alpha$, $\alpha_0$, $\gamma_0$,
$\overline{\lambda}$, $\overline{\mu}$, $\tau$, $M$.

\medskip
\noindent
\emph{Term $\Gamma^+_0-\Gamma^{D_1}_0$.}

Arguing similarly to the proof of Proposition
\ref{prop:integr-repres}, we have that, for every $y,w\in \R^3$,
$y\neq w$, and for every $l,m \in \R^3$,
\begin{multline}
  \label{1.7}
   (\Gamma^+_0-\Gamma^{D_1}_0)(y,w)m \cdot l = \int_{\R^3} (\C_0^{D_1}-\C_0)(\chi_{D_1} -\chi_{\R^3_+})\nabla \Gamma^{D_1}_0(\cdot,y)l \cdot \nabla \Gamma^+_0(\cdot,w)m =\\
=\int_{D_1\setminus\R^3_+}
(\C_0^{D_1}-\C_0)\nabla \Gamma^{D_1}_0(\cdot,y)l \cdot \nabla \Gamma^+_0(\cdot,w)m-\\
-\int_{\R^3_+\setminus D_1} (\C_0^{D_1}-\C_0)\nabla
\Gamma^{D_1}_0(\cdot,y)l \cdot \nabla \Gamma^+_0(\cdot,w)m.
\end{multline}
Therefore
\begin{equation}
  \label{2.7}
   |(\Gamma^+_0-\Gamma^{D_1}_0)(y_h,w_h)m \cdot l|\leq  C\int_{A\cup B}\frac{1}{|x-y_h|^2|x-w_h|^2},
\end{equation}
where
\begin{equation}
A=\left\{x\in (\R^3_+\setminus D_1) \cup (D_1 \setminus \R^3_+)\ |\ |x|\geq \frac{\rho_0}{\sqrt{1+M_0^2}}\right\},
\end{equation}
\begin{equation}
B=\left\{x\in (\R^3_+\setminus D_1) \cup (D_1 \setminus \R^3_+)\ |\ |x|\leq \frac{\rho_0}{\sqrt{1+M_0^2}}\right\},
\end{equation}
and $C$ only depends on $M_0$, $\alpha$, $\alpha_0$, $\gamma_0$,
$\overline{\lambda}$, $\overline{\mu}$, $\tau$, $M$. By our
hypotheses, $h\leq \frac{\rho_0}{2\sqrt{1+M_0^2}}$. Hence, for
every $x\in A$, $h\leq \frac{|x|}{2}$, $|x-y_h|\geq |x| -h\geq
\frac{|x|}{2}$, and similarly $|x-w_h|\geq \frac{|x|}{2}$, so that
\begin{equation}
  \label{1.8}
   \int_A \frac{1}{|x-y_h|^2|x-w_h|^2}\leq 16 \int_{|x|\geq \frac{\rho_0}{\sqrt{1+M_0^2}}}\frac{1}{|x|^4}= \frac{C}{\rho_0},
\end{equation}
with $C$ only depending on $M_0$.

By the local representation of the boundary of $D_1$ as a $C^{1,\alpha}$ graph, it follows that

\begin{equation}
B\subset \left\{x\in \R^3\ |\   |x'|\leq \frac{\rho_0}{\sqrt{1+M_0^2}}, |x_3|\leq \frac{M_0}{\rho_0^\alpha}|x'|^{1+\alpha}\right\}.
\end{equation}
By performing the change of variables $z = \frac{x}{h}$, we have
\begin{multline}
  \label{2.8}
   \int_B \frac{1}{|x-y_h|^2|x-w_h|^2}\leq \int_{|x'|\leq \frac{\rho_0}{\sqrt{1+M_0^2}}}dx'\int_{-\frac{M_0}{\rho_0^\alpha}|x'|^{1+\alpha}}^{\frac{M_0}{\rho_0^\alpha}|x'|^{1+\alpha}}
    \frac{1}{|x-y_h|^2|x-w_h|^2} dx_3 =\\
    =\frac{1}{h} \int_{|z'|\leq \frac{\rho_0}{h\sqrt{1+M_0^2}}}dz'\int_{-M_0\left(\frac{h}{\rho_0}\right)^\alpha|z'|^{1+\alpha}}^{M_0\left(\frac{h}{\rho_0}\right)^\alpha|z'|^{1+\alpha}}
    \frac{1}{|z+e_3|^2|z+\lambda_w e_3|^2} dz_3\leq\\
    \leq \frac{1}{h} \int_{\R^2}dz'\int_{-M_0\left(\frac{h}{\rho_0}\right)^\alpha|z'|^{1+\alpha}}^{M_0\left(\frac{h}{\rho_0}\right)^\alpha|z'|^{1+\alpha}}
    \frac{1}{|z+e_3|^2|z+\lambda_w e_3|^2} dz_3.
\end{multline}
Denoting
\begin{equation}
  \label{1.9}
   D(z) = \left(|z'|^2 + (z_3+1)^2\right) \left(|z'|^2 + (z_3+\lambda_w)^2\right),
\end{equation}
we have
\begin{equation}
  \label{2.9}
   \int_B \frac{1}{|x-y_h|^2|x-w_h|^2}\leq \frac{1}{h}(J_1+J_2),
\end{equation}
where
\begin{equation}
  \label{3.9}
   J_1= \int_{|z'|\leq\left(\frac{1}{3M_0}\right)^{\frac{1}{1+\alpha}}}dz'
    \int_{-M_0\left(\frac{h}{\rho_0}\right)^\alpha|z'|^{1+\alpha}}^{M_0\left(\frac{h}{\rho_0}\right)^\alpha|z'|^{1+\alpha}} \frac{1}{D(z)}dz_3,
\end{equation}
\begin{equation}
  \label{4.9}
   J_2=
   \int_{|z'|\geq\left(\frac{1}{3M_0}\right)^{\frac{1}{1+\alpha}}}dz'
    \int_{-M_0\left(\frac{h}{\rho_0}\right)^\alpha|z'|^{1+\alpha}}^{M_0\left(\frac{h}{\rho_0}\right)^\alpha|z'|^{1+\alpha}} \frac{1}{D(z)}dz_3.
\end{equation}
To estimate $J_1$, let us notice that, recalling $h\leq \frac{\rho_0}{2\sqrt{1+M_0^2}}$,
\begin{equation}
   |z_3+\lambda_w|\geq \lambda_w -|z_3|\geq \frac{2}{3} - M_0\left(\frac{h}{\rho_0}\right)^\alpha|z'|^{1+\alpha} \geq \frac{1}{3},
\end{equation}
and, a fortiori, $|z_3+1|\geq \frac{1}{3}$. Hence $D(z)\geq \frac{1}{3^4}$ and
\begin{equation}
  \label{5.9}
   J_1\leq 3^4 \int_{|z'|\leq\left(\frac{1}{3M_0}\right)^{\frac{1}{1+\alpha}}}
    2M_0\left(\frac{h}{\rho_0}\right)^\alpha|z'|^{1+\alpha} dz' = C \left(\frac{h}{\rho_0}\right)^\alpha,
\end{equation}
with $C$ only depending on $M_0$ and $\alpha$.

To estimate $J_2$ we use the trivial inequality $D(z)\geq |z'|^4$
when $\alpha < 1$, and $D(z) \geq C(M_0)|z'|^{ \frac{7}{2}  }$
when $\alpha = 1$, so obtaining
\begin{equation}
  \label{1.10}
   J_2\leq  C \left(\frac{h}{\rho_0}\right)^\alpha,
\end{equation}
with $C$ only depending on $M_0$ and $\alpha$.

By \eqref{2.7}, \eqref{1.8}, \eqref{2.9}, \eqref{5.9},
\eqref{1.10}, we have

\begin{equation}
  \label{2.10}
   |(\Gamma^+_0-\Gamma^{D_1}_0)(y_h,w_h)m\cdot l|\leq \frac{C}{h}\left(\frac{h}{\rho_0} +\left(\frac{h}{\rho_0}\right)^\alpha\right),
\end{equation}
with $C$ only depending on $M_0$, $\alpha$, $\alpha_0$,
$\gamma_0$, $\overline{\lambda}$, $\overline{\mu}$, $\tau$, $M$.

\medskip
\noindent
\emph{Term $\Gamma-\Gamma_0$.}

Similarly to the proof of Proposition \ref{prop:integr-repres}, we have that, for every $y$, $w\in \R^3$,
$y\neq w$,
\begin{equation}
  \label{3.10}
   (\Gamma_0-\Gamma)(y,w)m\cdot l
    =\int_{\R^3} (\C-\C_0)
    \nabla \Gamma(\cdot,y)l \cdot \nabla
   \Gamma_0(\cdot,w)m.
\end{equation}
{}From this identity, the arguments of the proof are similar to
those seen to estimate the addend $J_0$ in the expression of
$(\Gamma^{D_1}_0-\Gamma^{D_1})(y_h,w_h)l\cdot m$ given by
\eqref{1.3}, so that
\begin{equation}
  \label{4.10}
   |(\Gamma_0-\Gamma)(y_h,w_h)m\cdot l| \leq \frac{C}{h}\left(\frac{h}{\rho_0} \right),
\end{equation}
with $C$ only depending on $M_0$, $\alpha$, $\alpha_0$,
$\gamma_0$, $\overline{\lambda}$, $\overline{\mu}$, $\tau$, $M$.

\medskip
\noindent \emph{Conclusion.} Finally, {}from \eqref{1.1},
\eqref{2.1}, \eqref{5.2}, \eqref{6.6bis}, \eqref{2.10},
\eqref{4.10}, we have
\begin{multline}
  \label{1.11}
   |(\Gamma^{D_2}-\Gamma^{D_1})(y_h,w_h)e_i\cdot e_i| \geq \\
    \geq\frac{\mathcal{C}}{h}\left(1-C_1\frac{h}{d(P,D_2)}-C_2\frac{h}{\rho_0} -C_3\left(\frac{h}{\rho_0}\right)^\alpha - C_4\left(\frac{h}{\rho_0}\right)^\tau\right),
\end{multline}
with $C_i$, $i=1,...,4$, only depending on $M_0$, $\alpha$,
$\alpha_0$, $\gamma_0$, $\overline{\lambda}$, $\overline{\mu}$,
$\tau$, $M$ and $\mathcal{C}$  only depending on $\alpha_0$,
$\gamma_0$, $\overline{\lambda}$, $\overline{\mu}$ and $\eta_0$.
Let $h_1 = \min \{ \frac{1}{2}, \frac{1}{5C_1} \}$,
$h_2=\min\left\{\frac{1}{2 \sqrt{1+M_0^2}}, \frac{1}{5C_2},
\frac{1}{(5C_3)^{\frac{1}{\alpha}}},
\frac{1}{(5C_4)^{\frac{1}{\tau}} }
\right\}$. If $h\leq \min\{h_1d(P,D_2),h_2\rho_0\}$, then
\begin{equation}
  \label{1.12}
   |(\Gamma^{D_2}-\Gamma^{D_1})(y_h,w_h)e_i\cdot e_i| \geq \frac{\mathcal{C}}{5h},
\end{equation}
Let $\widetilde{h}=\min\left\{h_1, \frac{h_2}{\widetilde{C}}\right\}$, where $\widetilde{C}$ has been
introduced in \eqref{diam}. Then inequality \eqref{eq:11.2.new} holds for every $h$ such that
$h\leq \widetilde{h}\, d(P,D_2)$.

\end{document}